\newcommand{\PreserveBackslash}[1]{\let\temp=\\#1\let\\=\temp}
\newcolumntype{C}[1]{>{\PreserveBackslash\centering}p{#1}}
\newcolumntype{R}[1]{>{\PreserveBackslash\raggedleft}p{#1}}
\newcolumntype{L}[1]{>{\PreserveBackslash\raggedright}p{#1}}
\newcommand{\Mylim}[2]{\mathop{\lim}\limits_{#1}{#2}}
\def\wbar{\accentset{{\cc@style\underline{\mskip8mu}}}}
\newcommand{\kyuta}[1]{\mathop{#1}\limits^\sim}
\theoremstyle{plain}
\newtheorem{theorem}{Theorem}[section] 
\newtheorem{definition}[theorem]{Definition} 
\newtheorem{lemma}[theorem]{Lemma}
\newtheorem{corollary}[theorem]{Corollary}
\newtheorem{example}[theorem]{Example}
\newtheorem{proposition}[theorem]{Proposition}
\newtheorem{condition}[theorem]{Condition}
\newtheorem{remark}[theorem]{Remark}
\numberwithin{equation}{section}
\newcounter{FigureCounter}
\begin{document}

\title{On finiteness of spectral radius order}
\author{Yanlong Ding\footnotemark[1]\and Chuanyuan Ge\footnotemark[2] \and Shiping Liu\footnotemark[3]}

\footnotetext[1]{School of Mathematical Sciences, 
University of Science and Technology of China, Hefei 230026, China. \\
Email address:
{\tt dylustc@mail.ustc.edu.cn} }
\footnotetext[2]{School of Mathematics and Statistics, Fuzhou University, Fuzhou 350108, China. \\
Email address:
{\tt gechuanyuan@mail.ustc.edu.cn} }
\footnotetext[3]{School of Mathematical Sciences, 
University of Science and Technology of China, Hefei 230026, China. \\
Email address:
{\tt spliu@ustc.edu.cn}
}

\date{}\maketitle

\begin{abstract}
The concept of spectral radius order plays an crucial role in the breakthrough work on equiangular lines due to Jiang, Tidor, Yao, Zhang, and Zhao [Ann. of Math. (2) 194 (2021), no. 3, 729-743]. However, it is difficult to calculate the spectral radius order explicitly in general, or even to characterize numbers with finite spectral radius order. In this paper, we characterize numbers with finite spectral radius orders in two special classes: quadratic algebraic integers and the numbers no larger than $2$. Additionally, we derive precise values of the spectral radius order of two infinite families of quadratic algebraic integers.

\end{abstract}

\section{Introduction}\label{sec:intro}
{The spectral radius order is a very natural concept introduced by Jiang and Polyanskii \cite{Jiang-Polyanskii-20}. Their motivation originates from the study of equiangular lines. 
\begin{definition}\label{def:kappa}
       For any $\lambda\in (0,\infty)$, the spectral radius order $\kappa(\lambda)$ of $\lambda$ is defined as
       $$
\kappa(\lambda)=\min\left\{ m\ \vert\  \text{there exists a graph } G=(V,E) \mbox{ with }|V|=m \mbox{ and } \lambda_1(G)=\lambda\right\},
        $$
 where $\lambda_1(G)$ is the spectral radius of $G$, that is, the largest eigenvalue of the adjacency matrix of $G$. If there exists no graph with spectral radius $\lambda$, we set  $\kappa(\lambda)=+\infty$. 
\end{definition}
 A set of lines passing through the origin in $\mathbb{R}^d$ is called equiangular if the angle between any two of them is constant. Lemmens and Seidel \cite{Lemmens-Seidel-73} proposed to determine the maximum number $N_\alpha(d)$ of equiangular lines in $\mathbb{R}^d$ with a fixed angle $\arccos\alpha$. In fact, they completely determined the values of $N_{1/3}(d)$ for all $d$ and proved particularly $N_{1/3}(d)=2(d-1)$ for all sufficiently large $d$. Neumaier \cite{Neumaier-89} subsequently proved that $N_{1/5}(d)=\lfloor 3(d-1)/2\rfloor$ for all sufficiently large $d$. Cao, Koolen, Lin, and Yu \cite{Cao-Koolen-Lin-Yu} proved that $N_{1/5}(d)=\lfloor 3(d-1)/2\rfloor$ for $d\geq 186$. For $k\geq 3$, Bukh \cite{Bukh-16} conjectured that $N_{1/(2k-1)}(d)=kd/(k-1)+O_k(1)$ as $d\rightarrow\infty.$

Building upon previous works on general $\alpha$ \cite{Bukh-16,MR3742757},  Jiang and Polyanskii \cite{Jiang-Polyanskii-20} conjectured that for any $\alpha\in (0,1)$
\[\lim_{d\rightarrow\infty}\frac{N_\alpha(d)}{d}=\frac{\kappa(\lambda) }{\kappa(\lambda)-1}, \,\,\text{where}\,\,\lambda:=\frac{1-\alpha}{2\alpha}.\]
Jiang and Polyanskii \cite{Jiang-Polyanskii-20} themselves confirmed the conjecture when $\lambda<\sqrt{2+\sqrt{5}}$. Since $\kappa(\lambda)=\lambda+1$ for any $\lambda\in \mathbb{Z}^+$, Jiang and Polyanskii's conjecture covers Bukh's conjecture.
   
In 2019,  Jiang, Tidor, Yao, Zhang, and Zhao \cite{MR4334975} confirmed Jiang and Polyanskii's conjecture. Exactly, they proved the following theorem. 
\begin{theorem}\label{thm:k-lambad}
    Fix $\alpha\in (0,1)$. Set $\lambda:=\frac{1-\alpha}{2\alpha}$. We have
\begin{itemize}
    \item [(i)]  $N_{\alpha}(d)=\left\lfloor \frac{(d-1)\kappa(\lambda)}{\kappa(\lambda)-1}\right\rfloor$ for $d>2^{2^{C\kappa(\lambda)/\alpha}}$ when $\kappa(\lambda)<\infty$, where $C$ is an absolute constant;
    \item  [(ii)]  $N_{\alpha}(d)=d+o_{\alpha}(d)$ as $d\to \infty$ when $\kappa(\lambda)=\infty$.
\end{itemize}
\end{theorem}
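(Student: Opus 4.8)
The plan is to follow the standard translation of equiangular lines into spectral graph theory and combine it with the key new ingredient of \cite{MR4334975}: a bound on the multiplicity of the second eigenvalue of a bounded-degree graph. First I would set up the dictionary. A system of $N$ equiangular lines in $\R^d$ with common angle $\arccos\alpha$ is encoded, after choosing unit direction vectors, by the Gram matrix $M=I+\alpha S$ with $S$ a Seidel matrix of order $N$ (symmetric, zero diagonal, off-diagonal entries $\pm1$); the two constraints are $M\succeq 0$ and $\operatorname{rank}M\le d$, equivalently $\lambda_{\min}(S)\ge-1/\alpha$ and $\dim\ker(S+\tfrac1\alpha I)\ge N-d$. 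Writing $S=J-I-2A_H$ for the adjacency matrix $A_H$ of a graph $H$ on $N$ vertices, a direct computation shows $\langle A_Hv,v\rangle\le\lambda\lVert v\rVert^2$ for all $v\perp\mathbf 1$ (with $\lambda=\tfrac{1-\alpha}{2\alpha}$ as in Definition~\ref{def:kappa}), so $\lambda_2(A_H)\le\lambda$; moreover $\ker(S+\tfrac1\alpha I)$ and the $\lambda$-eigenspace of $A_H$ coincide after restriction to $\mathbf 1^\perp$, whence $\bigl|\dim\ker(S+\tfrac1\alpha I)-\operatorname{mult}_{A_H}(\lambda)\bigr|\le1$. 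A further switching argument (going back to Balla--Dr\"axler--Keevash--Sudakov and sharpened in \cite{MR4334975}) lets one assume $H$ has bounded maximum degree, consistently with $\Delta(G)\le\lambda_1(G)^2$, so that the multiplicity estimate applies to its components. The problem is thereby reduced to estimating $\operatorname{mult}_{A_H}(\lambda)$ in terms of $N$ and $d$.

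For the lower bound in (i), put $\kappa=\kappa(\lambda)$ and fix a graph $G_0$ on $\kappa$ vertices with $\lambda_1(G_0)=\lambda$. If $H$ is the disjoint union of $m$ copies of $G_0$, then $\lambda$ is an eigenvalue of $A_H$ of multiplicity $m$, and reversing the dictionary yields $m\kappa$ equiangular lines in dimension at most $m(\kappa-1)+O(1)$; optimizing $m$ for a given $d$ and tracking the additive constants gives $N_\alpha(d)\ge\lfloor(d-1)\kappa/(\kappa-1)\rfloor$, recovering the Jiang--Polyanskii construction \cite{Jiang-Polyanskii-20}.

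For the upper bound in (i), decompose $H$ into connected components. Since $\lambda_2(A_H)\le\lambda$, at most one component $C_0$ has spectral radius exceeding $\lambda$; every other component either has spectral radius $<\lambda$, and then contributes nothing to the $\lambda$-eigenspace, or has spectral radius exactly $\lambda$, in which case it is itself a graph realizing $\kappa(\lambda)$ and so has at least $\kappa$ vertices while contributing multiplicity exactly $1$ by Perron--Frobenius. For $C_0$ one has $\lambda\le\lambda_2(C_0)\le\lambda_2(A_H)\le\lambda$, so $\lambda$, if it is an eigenvalue of $C_0$ at all, is its \emph{second} eigenvalue; the multiplicity bound of \cite{MR4334975} then gives $\operatorname{mult}_{A_{C_0}}(\lambda)=o(|V(C_0)|)=o(N)$. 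Letting $m$ be the number of components of spectral radius exactly $\lambda$, these facts give $N-d-1\le\operatorname{mult}_{A_H}(\lambda)\le m+o(N)$ together with $N\ge m\kappa$, whence $N\le\tfrac{\kappa}{\kappa-1}\,d+o(N)=\tfrac{\kappa}{\kappa-1}\,d+o(d)$.

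The main obstacle — beyond proving the second-eigenvalue multiplicity estimate, which carries the real analytic weight — is to promote this $o(d)$ error to the \emph{exact} floor $\lfloor(d-1)\kappa/(\kappa-1)\rfloor$ once $d$ exceeds the tower threshold $2^{2^{C\kappa/\alpha}}$; this requires a stability analysis showing that the only line systems within $o(d)$ of the bound are, after switching, disjoint copies of $G_0$. Part~(ii) then follows from the same scheme: if $\kappa(\lambda)=\infty$ there is no graph of spectral radius exactly $\lambda$, so $H$ has no component of spectral radius $\lambda$, the $\lambda$-eigenspace of $A_H$ is contained in $C_0$ alone, and the multiplicity bound forces $N-d\le\operatorname{mult}_{A_H}(\lambda)+1=o(N)$, i.e.\ $N=d+o_\alpha(d)$; the matching lower bound $N_\alpha(d)\ge d$ is elementary.
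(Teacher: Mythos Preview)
The paper does not prove Theorem~\ref{thm:k-lambad}; it is quoted in the introduction as the main result of Jiang, Tidor, Yao, Zhang, and Zhao \cite{MR4334975}, solely to motivate the study of $\kappa(\lambda)$. There is therefore no proof in this paper to compare your proposal against.

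For what it is worth, your outline is a reasonable high-level summary of the argument in \cite{MR4334975}: the Gram/Seidel encoding, the passage to a graph $H$ with $\lambda_2(A_H)\le\lambda$, the reduction to bounded degree via switching, the component decomposition using Perron--Frobenius, and the sublinear bound on the multiplicity of the second eigenvalue as the decisive new input. Two caveats: the reduction to bounded maximum degree is not as direct as you suggest (it requires a Ramsey-type cleanup and is one source of the tower in the threshold $2^{2^{C\kappa/\alpha}}$), and the sharpening from $N\le\frac{\kappa}{\kappa-1}d+o(d)$ to the exact floor $\lfloor(d-1)\kappa/(\kappa-1)\rfloor$ is obtained in \cite{MR4334975} by a careful counting over components once the multiplicity bound has been applied, rather than by a separate stability analysis as you describe.
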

 A natural problem arising from their results is to determine all \( \lambda \) for which \( \kappa(\lambda) \) is finite and calculate the spectral radius order of $\lambda$ explicitly. 
 

For a graph, all its eigenvalues are totally real algebraic integers. By the Perron-Frobenius theorem, the largest eigenvalue is greater than or equal to the absolute value of the other eigenvalues. In particular, the largest eigenvalue is the spectral radius. Thus, if \( \kappa(\lambda) < \infty \), then \( \lambda \) must satisfy the following two conditions.
\begin{condition}\label{condition}
    $ $
    \begin{enumerate}
    \item[(1)]\label{conditionsA} { \( \lambda \) is a totally real algebraic integer; }
    \item[(2)]\label{conditionsC}  { \( \lambda \) is greater than or equal to the absolute value of its algebraic conjugates.}
\end{enumerate}
\end{condition}

In general, Condition \ref{condition} is not sufficient to ensure the finiteness of the spectral radius order, see \cite[Remake 6.2]{schildkraut2023}. 
 In this paper, we prove that, under the assumption that the algebraic degree of $\lambda$ is $2$ or $\lambda\leq 2$,  Condition \ref{condition} is sufficient for $\kappa(\lambda)<\infty$.
 
\begin{theorem}\label{thm:main1}
Suppose $\lambda\in (0,\infty)$ satisfies one of the following properties:
\begin{itemize}
    \item [(i)] $\lambda$ has algebraic degree $2$;
    \item [(ii)] \(\lambda\leq 2\).
\end{itemize}
Then $\kappa(\lambda)<\infty$ if and only if $\lambda$ satisfies  Condition \ref{condition}.
\end{theorem}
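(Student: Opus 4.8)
The ``only if'' direction is precisely the necessity of Condition~\ref{condition} for $\kappa(\lambda)<\infty$ recorded in the paragraph preceding Condition~\ref{condition}, so the whole task is the ``if'' direction: assuming $\lambda$ satisfies Condition~\ref{condition} and falls under hypothesis (i) or (ii), exhibit a finite graph $G$ with $\lambda_1(G)=\lambda$. The plan is to handle the two hypotheses by separate explicit constructions.

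For (ii), suppose $\lambda\le 2$. Then Condition~\ref{condition}(2) puts every algebraic conjugate of $\lambda$ in $[-\lambda,\lambda]\subseteq[-2,2]$, so by the $[-2,2]$ version of Kronecker's theorem $\lambda=\zeta+\zeta^{-1}=2\cos(2\pi a/N)$ for a primitive $N$-th root of unity $\zeta$, with $\gcd(a,N)=1$ and $N$ minimal. I would then run through the Galois orbit $\{2\cos(2\pi b/N):\gcd(b,N)=1\}$: comparing $\lambda$ with the conjugate at $b=1$ (which is the largest conjugate) forces $a=1$, and then comparing with the conjugate whose angle is nearest $\pi$ — for $N$ odd this is $b=(N\pm1)/2$, which is coprime to $N$ and contributes absolute value $2\cos(\pi/N)>2\cos(2\pi/N)=\lambda$ — rules out all odd $N\ge 5$, leaving only $N=2m$ even, i.e.\ $\lambda=2\cos(\pi/m)$ with $m\ge 3$, together with the degenerate case $\lambda=2$. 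Each of these is a graph spectral radius: the path $P_{m-1}$ has eigenvalues $2\cos(k\pi/m)$ and hence spectral radius $2\cos(\pi/m)$, and the triangle $C_3$ has spectral radius $2$; equivalently one may quote Smith's classification of connected graphs of spectral radius at most $2$ and observe that their spectral radii are exactly $\{2\cos(\pi/m):m\ge 3\}\cup\{2\}$.

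For (i), write $x^{2}-px+q$ ($p,q\in\mathbb{Z}$) for the minimal polynomial of $\lambda$, with conjugate $\lambda'=\frac{p-\sqrt{D}}{2}$ and $D:=p^{2}-4q>0$ a non-square; from $\lambda>\lambda'$ together with $\lambda\ge|\lambda'|$ (Condition~\ref{condition}(2)) one gets $p=\lambda+\lambda'\ge 0$. The idea is to realize $x^2-px+q$ as the characteristic polynomial of the $2\times2$ quotient matrix $\left(\begin{smallmatrix}d_1&e\\f&d_2\end{smallmatrix}\right)$ of an equitable partition $V(G)=V_1\sqcup V_2$ of a connected graph: let $V_i$ induce a connected $d_i$-regular graph and put a biregular bipartite graph between $V_1$ and $V_2$ with degree $e$ on the $V_1$-side and $f$ on the $V_2$-side, so that $p=d_1+d_2$ and $q=d_1d_2-ef$. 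For connected $G$ the standard equitable-partition argument — the nonnegative quotient matrix has a nonnegative eigenvector for its Perron root, which pulls back along the partition to a nonnegative eigenvector of the irreducible matrix $A(G)$ — then gives $\lambda_1(G)=\frac{p+\sqrt D}{2}=\lambda$. To finish one chooses parameters: there is always a split $d_1+d_2=p$ with $d_1d_2\ge q+1$ (the balanced split works because a positive non-square $D\equiv 0,1\!\!\pmod 4$ satisfies $D\ge 8$ when $p$ is even and $D\ge 5$ when $p$ is odd), and then one sets $e=d_1d_2-q\ge 1$, $f=1$, $|V_1|=s$ and $|V_2|=se$ for a large even $s$; all the required connected regular graphs and the biregular bipartite graph then exist, and $G$ is connected. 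When $q\le-1$ this simplifies considerably — take $V_1=K_{p+1}$ and $V_2$ an independent set, recovering a complete split graph (and, for $p=0$, the star $K_{1,-q}$ realizing $\sqrt{-q}$).

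The main obstacle is case (i) with $q\ge 1$, i.e.\ when both conjugates of $\lambda$ are positive: the complete-split-graph family always has $q\le 0$, so a ``clique plus independent set'' graph cannot work and one is forced into the two-regular-parts construction above. The bulk of the work there is the bookkeeping needed to guarantee that the prescribed vertex counts support connected regular graphs of the required degrees, that the biregular bipartite graph between the parts exists, and that the resulting graph is connected, so that $\lambda_1(G)$ really equals the Perron root of the $2\times2$ quotient.
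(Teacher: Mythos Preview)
Your proposal is correct and follows essentially the same route as the paper: Kronecker's theorem plus paths for case~(ii), and a $2\times 2$ equitable-quotient construction for case~(i). The only noteworthy difference is that the paper dispenses with your connectedness bookkeeping by quoting a general fact (Theorem~\ref{SpectralRadiusRelation}, from \cite{YOU201921}) that for any nonnegative matrix $M$ with equitable quotient $B$ one has $\lambda_1(B)=\lambda_1(M)$; this lets the paper build $G$ from \emph{arbitrary} (not necessarily connected) regular graphs on the two parts, so the ``bulk of the work'' you anticipate simply disappears. Your Perron--Frobenius argument via a connected $G$ also goes through (for $q\ge 1$ one has $p\ge 3$, so $d_1=\lceil p/2\rceil\ge 2$ and a connected $d_1$-regular $G_1$ exists, after which $f=1$ makes $G$ connected), but it is strictly more laborious. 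A small terminological slip: for $q\le -1$ your graph with $V_1=K_{p+1}$ and each vertex of $V_2$ joined to a single clique vertex is not literally a complete split graph unless $(p+1)\mid q$, though the construction is of course valid.
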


Furthermore, we derive precise values of the spectral radius order of two infinite families of quadratic algebraic integers. 

\begin{theorem}\label{thm:compute}
  Let $n,m\in \mathbb{Z}^+$. We have the following facts:
\begin{itemize}   
    \item [(i)]   If $n> \frac{(m-1)^2}{4}$, then $\kappa\left(\sqrt{n(n+m)}\right)=2n+m$;
  \item [(ii)] If $n>(m-1)^2+1$, then 
  $$\kappa\left(\frac{n-m-1+\sqrt{(n-m-1)^2+4m(n-m)}}{2}\right)=n.$$
\end{itemize}
\end{theorem}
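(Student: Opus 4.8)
The plan is to treat the two parts in parallel: for each target value $\lambda$ (either $\sqrt{n(n+m)}$ or the stated quadratic surd), we must exhibit a graph on the claimed number of vertices whose spectral radius is exactly $\lambda$ (the upper bound on $\kappa(\lambda)$), and then prove that no graph on fewer vertices achieves spectral radius $\lambda$ (the lower bound). For the constructions, the natural candidates are complete multipartite-type graphs or their close relatives: recall that the complete bipartite graph $K_{a,b}$ has spectral radius $\sqrt{ab}$, so for part (i) the graph $K_{n,n+m}$ on $2n+m$ vertices immediately realizes $\sqrt{n(n+m)}$. For part (ii), I expect the relevant construction to be a complete split graph or a join of a clique with an independent set — a clique $K_{m}$ joined to an independent set of size $n-m$ gives a graph on $n$ vertices whose adjacency matrix has a quotient matrix $\begin{pmatrix} m-1 & n-m \\ m & 0\end{pmatrix}$ by equitable partition, whose largest eigenvalue is exactly $\frac{n-m-1+\sqrt{(n-m-1)^2+4m(n-m)}}{2}$. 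So the constructions are essentially forced and routine; the content is entirely in the lower bounds.

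For the lower bounds I would argue as follows. Suppose $G$ has spectral radius $\lambda$ and $|V(G)| = N$. Since $\lambda$ is a quadratic algebraic integer, its minimal polynomial $p(x) = x^2 - sx - t$ (with $s = \lambda + \lambda'$, $t = -\lambda\lambda'$ the trace and norm data) must divide the characteristic polynomial $\phi_G(x)$ over $\mathbb{Z}[x]$; in particular the algebraic conjugate $\lambda'$ is also an eigenvalue of $G$, with the same multiplicity $k$, say. Then by interlacing / trace identities, $\operatorname{tr}(A) = 0$ and $\operatorname{tr}(A^2) = 2|E(G)|$ give the relations $k(\lambda + \lambda') + (\text{sum of remaining eigenvalues}) = 0$ and $k(\lambda^2 + \lambda'^2) + (\text{sum of squares of the rest}) = 2|E|$. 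The remaining eigenvalues are bounded in absolute value by $\lambda$ (Perron–Frobenius), and there are $N - 2k$ of them. Combining $\operatorname{tr}(A^2) \le \binom{N}{2}\cdot 2$ trivially is too weak; instead I would use that $2|E| = \operatorname{tr}(A^2) \le N(N-1)$ together with a sharper lower bound on $\operatorname{tr}(A^2)$ coming from the forced eigenvalues $\lambda, \lambda'$ and the constraint $\operatorname{tr}(A) = 0$. Concretely, minimizing $\sum \mu_i^2$ subject to $\sum \mu_i = -k(\lambda+\lambda')$, $|\mu_i| \le \lambda$, over $N - 2k$ variables, and then feeding the result into the hypothesis $n > (m-1)^2/4$ (resp.\ $n > (m-1)^2 + 1$), should force $N \ge 2n+m$ (resp.\ $N \ge n$). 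The hypotheses on $n$ versus $m$ are exactly what makes $\lambda$ large relative to $|\lambda'|$, so that the conjugate eigenvalue is "expensive" and cannot be hidden in a small graph.

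The main obstacle, I expect, is making the lower-bound counting argument tight rather than merely order-of-magnitude correct — one needs the precise extremal configuration of the "remaining" eigenvalues and to rule out all graphs on $N-1$ vertices, not just to get the right asymptotics. A clean way to handle this is to combine the two trace identities ($\operatorname{tr} A = 0$ and $\operatorname{tr} A^2 = 2|E| \le N(N-1)$) with a third input such as $\operatorname{tr}(A^3) = 6(\#\text{triangles}) \ge 0$, or the fact that the second-largest eigenvalue of $G$ is at most $\lambda' $ only if $G$ has very restricted structure; more robustly, one can use that if $\lambda_1(G) = \lambda$ then $\lambda \le \lambda_1(K_N) = N - 1$ is far too weak, so instead invoke the bound $\lambda_1(G)^2 \le 2|E|(N-1)/N$ (equality for regular complete-ish graphs) or the Hong-type bound $\lambda_1 \le \sqrt{2|E| - N + 1}$. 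Plugging $\lambda^2 = n(n+m)$ (resp.\ the explicit value) into such an edge–vertex inequality and using $2|E| \le N(N-1)$ should yield a quadratic inequality in $N$ whose smallest integer solution is exactly the claimed value of $\kappa(\lambda)$; checking that the extremal graph ($K_{n,n+m}$, resp.\ the split graph) attains equality then closes the gap and simultaneously confirms the upper bound. I would also need to separately dispose of the degenerate possibility that $\phi_G$ contains $p(x)$ with multiplicity $k \ge 2$ by noting this only increases $\operatorname{tr}(A^2)$ and hence $|E|$, strengthening the bound.
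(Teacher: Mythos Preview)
Your upper-bound constructions are essentially correct, with one slip: in part (ii) the graph should be $K_{n-m}\vee\overline{K_m}$ (a clique of size $n-m$ joined to an independent set of size $m$), not the other way round. Its equitable quotient matrix is $\begin{pmatrix} n-m-1 & m\\ n-m & 0\end{pmatrix}$, whose characteristic polynomial $x^{2}-(n-m-1)x-m(n-m)$ has the desired largest root. Your matrix $\begin{pmatrix} m-1 & n-m\\ m & 0\end{pmatrix}$ produces $\frac{m-1+\sqrt{(m-1)^{2}+4m(n-m)}}{2}$, which is a different number unless $n=2m$.

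The genuine gap is in the lower bounds. Your proposed toolkit (the identities $\operatorname{tr}A=0$, $\operatorname{tr}A^{2}=2|E|\le N(N-1)$, Hong's inequality $\lambda_{1}\le\sqrt{2|E|-N+1}$) is simply too weak to force the exact value of $N$. Take part (i) with $n=3$, $m=1$, so $\lambda=\sqrt{12}$ and the claim is $\kappa(\lambda)=7$. The trace argument gives only $2\lambda^{2}=24\le N(N-1)$, i.e.\ $N\ge 6$; Hong's bound gives $12\le 2|E|-N+1$ together with $2|E|\le N(N-1)$, yielding merely $N\ge 5$. Neither reaches $7$, and adding $\operatorname{tr}A^{3}\ge 0$ does not help because the extremal graph $K_{n,n+m}$ is triangle-free. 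So the optimization you describe (``minimizing $\sum\mu_i^{2}$ subject to \ldots'') cannot close the gap: the slack is of order $n$, not $O(1)$.

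What your plan is missing for (i) is a structural step that the edge-count inequalities cannot see. Since $n(n+m)$ is not a perfect square under the hypothesis $n>(m-1)^{2}/4$ (check: any factorization $(2n+m)^{2}-(2k)^{2}=m^{2}$ forces $n\le (m-1)^{2}/4$), the conjugate $-\lambda$ is an eigenvalue of every connected component containing $\lambda$; hence that component is \emph{bipartite}. For a connected bipartite graph on $N$ vertices one has $\lambda_{1}\le\sqrt{\lfloor N/2\rfloor\lceil N/2\rceil}$, and a short computation shows $\lfloor(2n+m-1)^{2}/4\rfloor<n(n+m)$ precisely when $n>(m-1)^{2}/4$. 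This immediately gives $N\ge 2n+m$. The analogous argument for (ii) cannot use bipartiteness (the conjugate is not $-\lambda$), so a different structural reduction is needed there; your sketch does not indicate what would replace it. The paper's Section~5 (supplied via \verb|\input{chapters/chapter5.tex}| and not visible in the source you were given) presumably carries out these structural arguments; in any case, the purely numerical approach you outline will not reach the sharp constants.
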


It is worth noting that $\kappa(\lambda)$ is finite if and only if $\lambda$ can be realized as the largest eigenvalue of a graph. Problems of similar spirit have been investigated by several authors. In \cite{MR1189507}, Estes proved that every totally real algebraic integer can be realiezed as an eigenvalue of a graph. Salez \cite{MR3315609} strengthened this result by showing that every totally real algebraic integer is an eigenvalue of a tree graph. In \cite{MR766106}, Lind completely characterized the set of numbers given by the spectral radii of directed graphs.

This paper is structured as follows. In Section \ref{sec:pre}, we recall some basics from spectral graph theory. In Section \ref{sec:BasicResult}, we show several basic properties of the spectral radius order. We provide the proofs for Theorem \ref{thm:main1} and Theorem \ref{thm:compute} in Section \ref{sec:main1} and Section \ref{sec:calculation}, respectively.
}
\section{Preliminaries}\label{sec:pre}
{
In this paper, we only consider finite simple graphs, i.e., graphs without self-loops and multiple edges. For any given graph \( G = (V, E) \), we label the vertex set \( V \) as \( \{1, 2, \ldots, |V|\} \) for the sake of notational convenience. For any $i,j\in V$, we use $i\sim j$ to denote $\{i,j\}\in E$. We use  $K_n$ to denote the $n$-vertex complete graph and use $K_{s,t}$ to denote the complete bipartite graph with bipartite sets of size $s$ and $t$.

The complement graph $\overline{G}$ of a graph $G=(V,E)$ is the graph whose vertex set is $V$ and two vertices $i$ and $j$ are adjacent in $\overline{G}$ if and only if $i$ is not adjacent to $j$ in $G.$

Given two graphs $G_1=(V_1,E_1)$ and $ G_2=(V_2,E_2)$, we denote by $G_1\vee G_2$ the join of two graphs $G_1$ and $ G_2$, i.e.,  $G_1\vee G_2=(V,E)$ with $V=V_1\cup V_2$ and 
$$E:=E_1\cup E_2\cup\left\{ \{i,j\}:i\in V_1\text{ and }j\in V_2\right\}.$$

The Cartesian product of graphs  $G_1\text{ and } G_2$ is the  graph $G=(V_1\times V_2,E)$ with $(i,j)\sim (i',j')$ if and only if either $i=i'$ and $j\sim j'$ or $i\sim i'$ and $j= j'$}.

The Kronecker product of two graphs  $G_1\text{ and } G_2$ is the  graph $G=(V_1\times V_2,E)$ with $(i,j)\sim (i',j')$ if and only if  $i\sim i'$ and $j\sim j'$.

Let \( G\) be a graph. The adjacency matrix of \( G \) is a \( |V| \times |V| \) matrix \( A_G = (a_{ij})_{1 \leq i, j \leq |V|} \), where \( a_{ij} \) is defined as follows:
\[
a_{ij} := \begin{cases} 
0, & \text{if } i \not\sim j, \\
1, & \text{if } i \sim j.
\end{cases}
\]
   We list all eigenvalues of $A_G$  with multiplicity as  $$\lambda_1(G)\geq \lambda_2(G)\geq \cdots\geq \lambda_{|V|}(G).$$
   By the Perron-Frobenius theorem, 
   we have $\lambda_1(G)\geq |\lambda_i(G)|$ for any $i=1,\ldots,|V|$.
 We call $\lambda_1(G)$ the spectral radius of $G$. In general, for a non-negative matrix $M$, we call its largest positive eigenvalue as the spectral radius of $M$ and denote it as $\lambda_1(M).$

We recall the following lemma, which is a direct consequence of the Perron-Frobenius theorem, see, e.g.,  \cite{MR347860}.
\begin{lemma}\label{lemma:pf1}
  Let $H$ be a subgraph of a connected graph $G$. We have $\lambda_1(H)\leq\lambda_1(G)$, where the equality holds if and only if $H=G$.
\end{lemma}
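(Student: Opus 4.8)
The plan is to reduce the statement to a monotonicity property of the spectral radius of nonnegative symmetric matrices, and then extract the equality case from the Perron--Frobenius theorem. First I would pad the adjacency matrix $A_H$ with zero rows and columns for the vertices in $V(G)\setminus V(H)$, obtaining a symmetric matrix $\tilde A_H$ indexed by $V(G)$. Since adjoining isolated vertices only introduces extra zero eigenvalues and $\lambda_1(H)\ge 0$, we have $\lambda_1(\tilde A_H)=\lambda_1(H)$. Because every edge of $H$ is an edge of $G$, the matrices satisfy $0\le \tilde A_H\le A_G$ entrywise, and moreover $H\neq G$ is equivalent to $\tilde A_H\neq A_G$: a missing vertex forces a zero row of $\tilde A_H$ where $A_G$, being connected and hence of minimum degree at least one, has a nonzero row, while a missing edge forces a zero entry where $A_G$ has a one.

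For the inequality I would use the nonnegative Rayleigh characterization $\lambda_1(M)=\max_{\|x\|=1,\,x\ge 0}x^{\top}Mx$, valid for symmetric nonnegative $M$ because replacing any maximizer by its entrywise absolute value cannot decrease the quadratic form. Choosing a nonnegative unit maximizer $u$ for $\tilde A_H$ and comparing quadratic forms termwise yields $\lambda_1(H)=u^{\top}\tilde A_H u\le u^{\top}A_G u\le \lambda_1(G)$, since $A_G-\tilde A_H\ge 0$ and $u\ge 0$.

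The substantive part is the equality case, which is where connectedness enters. Here I would invoke Perron--Frobenius: since $G$ is connected, $A_G$ is irreducible, so $\lambda_1(G)$ admits a strictly positive eigenvector $v$. Assuming $\lambda_1(H)=\lambda_1(G)=:\rho$, take $u\ge 0$, $u\neq 0$, to be the nonnegative $\rho$-eigenvector of $\tilde A_H$ supplied by the previous step. Using symmetry of $A_G$ and $A_G v=\rho v$, I would compute $\rho\,v^{\top}u=v^{\top}A_G u=v^{\top}\tilde A_H u+v^{\top}(A_G-\tilde A_H)u=\rho\,v^{\top}u+v^{\top}(A_G-\tilde A_H)u$, forcing $v^{\top}(A_G-\tilde A_H)u=0$. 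As $v>0$, $u\ge 0$, and $A_G-\tilde A_H\ge 0$, every summand vanishes, so the columns of $A_G$ and $\tilde A_H$ indexed by the support $S=\{j:u_j>0\}$ coincide.

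Finally I would bootstrap from the support to all of $V(G)$. If $S\neq V(G)$, connectedness of $G$ provides an edge from some $i\notin S$ to some $j\in S$; the eigenvalue equation at coordinate $i$ then reads $0=\rho u_i=(\tilde A_H u)_i\ge (\tilde A_H)_{ij}u_j=(A_G)_{ij}u_j=u_j>0$, a contradiction, where $(\tilde A_H)_{ij}=(A_G)_{ij}$ because column $j$ lies in $S$. Hence $S=V(G)$, so $u>0$ everywhere, all columns agree, and $\tilde A_H=A_G$, i.e. $H=G$; the converse implication is trivial. I expect the \emph{main obstacle} to be exactly this last bootstrapping step: one must use irreducibility to propagate the support of $u$ across the whole graph, which is the precise content of Perron--Frobenius that the plain Rayleigh-quotient monotonicity argument does not by itself provide.
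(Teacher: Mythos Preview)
Your argument is correct and is essentially the standard derivation of this monotonicity-with-equality statement from the Perron--Frobenius theorem. The paper itself does not supply a proof: it simply states the lemma as ``a direct consequence of the Perron--Frobenius theorem'' and refers the reader to Hoffman's paper \cite{MR347860}. So there is no proof in the paper to compare against; your write-up fills in exactly the details one would expect, with the Rayleigh-quotient comparison giving the inequality and the strict positivity of the Perron vector of $A_G$ (irreducibility from connectedness) driving the equality case. One very minor remark: your bootstrapping step implicitly uses $\rho>0$ to conclude $u_i=0$ from $\rho u_i=0$, but when $\rho=0$ the connected graph $G$ is a single vertex and $H=G$ trivially, so no harm is done.
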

   
The degree matrix of \( G \) is a \( |V| \times |V| \) diagonal matrix \( D_G \) whose \( i \)-th diagonal entry is the degree of vertex \( i \). The signless Laplacian of \( G \) is defined as $A_G + D_G$.

We denote the set of all $\lambda\in[0,+\infty)$ with $\kappa(\lambda)<\infty$ by  $\mathbb{S}$, that is,
     $$\mathbb{S}:=\{\lambda\in [0,+\infty)\ | \ \kappa(\lambda)<+\infty\}.$$

Let $\gamma$ be an algebraic integer and $p(x)$ be its minimal polynomial. We call a root of $p(x)$ an algebraic conjugate of $\gamma$. If all algebraic conjugates of $\gamma$ are real, we say $\gamma$ is  totally real.

\section{Basic properties of the spectral radius order}
\label{sec:BasicResult}
 In this section, we derive some basic properties of the spectral radius order. 
\begin{proposition}\label{pro:semi}
    The set $\mathbb{S}$ forms a semiring under real number addition and multiplication.
\end{proposition}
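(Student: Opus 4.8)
The plan is to verify the semiring axioms by exhibiting, for every pair $\lambda,\mu\in\mathbb{S}$, explicit graphs whose spectral radii are $\lambda+\mu$ and $\lambda\mu$, together with a base case witnessing that $0,1\in\mathbb{S}$ (so that the semiring has the required additive and multiplicative identities). Since $\mathbb{S}\subseteq[0,\infty)$, closure under the two operations and the presence of $0$ and $1$ are all that need checking; associativity, commutativity and distributivity are then inherited from $\R$. For the identities, note $\kappa(0)$ is witnessed by the one-vertex graph $K_1$ (spectral radius $0$) and $\kappa(1)=2$ is witnessed by $K_2$ (spectral radius $1$); more generally $\kappa(n)=n+1$ for $n\in\Z^+$ via $K_{n+1}$, as already remarked in the introduction.

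For closure under multiplication I would use the Kronecker product, which is recalled in Section~\ref{sec:pre} precisely for this purpose: if $G_1$ realizes $\lambda$ and $G_2$ realizes $\mu$, then $A_{G_1\times G_2}=A_{G_1}\otimes A_{G_2}$ (Kronecker product of matrices), hence its spectrum is the set of products $\lambda_i(G_1)\lambda_j(G_2)$; the Perron-Frobenius entries $\lambda_1(G_1)=\lambda$ and $\lambda_1(G_2)=\mu$ are nonnegative, so their product $\lambda\mu$ is the largest eigenvalue of $A_{G_1}\otimes A_{G_2}$ in absolute value and is therefore $\lambda_1(G_1\times G_2)$. Thus $\kappa(\lambda\mu)\le \kappa(\lambda)\kappa(\mu)<\infty$.

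For closure under addition the natural candidate is a graph built from the disjoint union $G_1\sqcup G_2$ by adding a bridge or by some gluing so that the Perron value becomes $\lambda+\mu$; the cleanest route is to pass to weighted/non-negative matrices via the block construction $\begin{pmatrix}A_{G_1}&B\\ B^{\top}&A_{G_2}\end{pmatrix}$ and choose the connecting block $B$ so that the Perron eigenvector of $G_1$ glued to that of $G_2$ becomes an eigenvector with eigenvalue $\lambda+\mu$, then realize the resulting integer non-negative matrix as the adjacency matrix of an honest simple graph (possibly after subdividing/blowing up to remove multiedges, using that all the data are algebraic integers). I expect this to be the main obstacle: unlike multiplication, there is no off-the-shelf graph operation that adds spectral radii, so one must carefully design the coupling and then argue that it can be implemented with a $0/1$ symmetric matrix of finite size, controlling that no larger eigenvalue is introduced (again invoking Perron-Frobenius and Lemma~\ref{lemma:pf1} to pin down which eigenvalue is the spectral radius). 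Once the addition construction is in hand, the semiring structure follows immediately.
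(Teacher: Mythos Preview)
Your treatment of the identities $0,1\in\mathbb{S}$ and of closure under multiplication via the Kronecker product is correct and matches the paper's proof exactly.

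The gap is in the additive part. You say that ``unlike multiplication, there is no off-the-shelf graph operation that adds spectral radii'' and then sketch an ad hoc block-matrix/gluing construction that you yourself do not complete. But there \emph{is} an off-the-shelf operation, and the paper even defines it in Section~\ref{sec:pre} alongside the Kronecker product: the Cartesian product $G_1\,\square\, G_2$. Its adjacency matrix is $A_{G_1}\otimes I + I\otimes A_{G_2}$, so its eigenvalues are precisely the pairwise sums $\lambda_i(G_1)+\lambda_j(G_2)$, and hence $\lambda_1(G_1\,\square\, G_2)=\lambda_1(G_1)+\lambda_1(G_2)$. This is the argument the paper uses, in one line. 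Your proposed route through a hand-built block matrix with a coupling block $B$ is not just unnecessary but genuinely incomplete as stated: you give no choice of $B$ that forces the top eigenvalue to be exactly $\lambda+\mu$ while keeping the matrix $0/1$ and symmetric, and the suggested ``blowing up to remove multiedges'' step would typically change the spectral radius. Replace that paragraph with the Cartesian product and the proof is finished.
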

\begin{proof}
    Since $\kappa(0)=1<\infty$ and $\kappa(1)=2<\infty$, we have $0,1\in  \mathbb{S}$. To prove this proposition, it suffices to prove that the $\mathbb{S}$ is closed under real number addition and multiplication. 

    Let $\lambda,\lambda'\in \mathbb{S}$. By definition of spectral radius order, there exist two graphs $G_1$ and $G_2$ such that $\lambda_1(G_1)=\lambda$ and $\lambda_1(G_2)=\lambda'$.   By direct computation, we have that the spectral radius of the Cartesian product (resp., Kronecker product) of $G_1$ and $G_2$ is $\lambda+\lambda'$ (resp., $\lambda\lambda'$). This implies that $\mathbb{S}$ is closed under real number addition and multiplication.
\end{proof}
It is worth noting that  $\mathbb{S}$ is not closed under real number division or 
 subtraction in general, since \( \lambda \) cannot be the spectral radius of any graph if \( \lambda \in (0, 1) \).  

The next proposition tells us that the spectral radius of the signless Laplacian of any graph has a finite spectral radius order.

\begin{proposition}
    For a graph \( G \), let \( \mu \) be the spectral radius of the signless Laplacian of \( G \). Then $\kappa(\mu)<\infty.$
\end{proposition}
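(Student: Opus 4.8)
The plan is to realize $\mu$ as the spectral radius of an explicitly constructed graph, using the well-known identity relating the signless Laplacian of $G$ to the adjacency matrix of a larger graph. Specifically, I would use the \emph{subdivision graph} (or equivalently the bipartite incidence construction): given $G=(V,E)$, let $S(G)$ be the graph obtained by inserting one new vertex on each edge of $G$. Its adjacency matrix, in block form with respect to the partition $V \sqcup E$, is
\[
A_{S(G)} = \begin{pmatrix} 0 & B \\ B^{\top} & 0 \end{pmatrix},
\]
where $B$ is the $|V|\times|E|$ unsigned incidence matrix of $G$. Then $A_{S(G)}^2 = \mathrm{diag}(BB^{\top}, B^{\top}B)$, and the classical identity $BB^{\top} = A_G + D_G$ shows that the nonzero eigenvalues of the signless Laplacian are exactly the squares of the nonzero eigenvalues of $A_{S(G)}$. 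In particular $\lambda_1(S(G)) = \sqrt{\mu}$.

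From here the argument splits into two short steps. First, $\sqrt{\mu}\in\mathbb{S}$, since $S(G)$ is a genuine finite simple graph (inserting vertices on edges creates no loops or multi-edges) with spectral radius $\sqrt{\mu}$; hence $\kappa(\sqrt{\mu})\le |V|+|E|<\infty$. Second, I invoke Proposition \ref{pro:semi}: $\mathbb{S}$ is closed under multiplication, so $\mu = \sqrt{\mu}\cdot\sqrt{\mu}\in\mathbb{S}$, i.e. $\kappa(\mu)<\infty$. This completes the proof. (Alternatively, one can avoid Proposition \ref{pro:semi} entirely: take the Kronecker product $S(G)\times K_2$, whose spectral radius is $\lambda_1(S(G))\cdot\lambda_1(K_2) = \sqrt{\mu}\cdot 1$ — no help — so instead one should directly take a graph whose adjacency matrix has an eigenvalue $\mu$; but the cleanest route is the semiring closure.)

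I do not anticipate a serious obstacle; the only point requiring a little care is the degenerate case $\mu = 0$, which happens precisely when $G$ has no edges, where $\kappa(0)=1$ trivially, and the case where $S(G)$ is disconnected — but $\lambda_1$ of a disconnected graph is still the max over components, and the identity $\lambda_1(S(G))^2 = \mu$ holds regardless of connectivity, so nothing breaks. The substantive content is entirely the incidence-matrix identity $BB^{\top}=A_G+D_G$ together with the subdivision-graph spectral correspondence, both of which are standard in spectral graph theory.
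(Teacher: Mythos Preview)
Your argument is correct; the subdivision-graph identity $\lambda_1(S(G))=\sqrt{\mu}$ together with the multiplicative closure of $\mathbb{S}$ from Proposition~\ref{pro:semi} does the job, and you handle the edge case $\mu=0$ properly. However, the paper proceeds differently: it invokes the \emph{line graph} rather than the subdivision graph, using the standard fact that $\lambda_1(L(G))=\mu-2$ (this is \cite[Proposition 1.4.1]{MR2882891}, which comes from the identity $A_{L(G)}=B^{\top}B-2I$ for the same incidence matrix $B$ you use). Since $2\in\mathbb{S}$, additive closure in Proposition~\ref{pro:semi} then yields $\mu=\lambda_1(L(G))+2\in\mathbb{S}$. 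So both proofs rest on an incidence-matrix identity plus the semiring structure, but the paper uses $B^{\top}B$ and addition where you use $BB^{\top}$ and multiplication. The paper's route is marginally shorter (one citation and one line), while yours is more self-contained since you spell out the spectral correspondence; neither has any real advantage over the other. Your parenthetical detour through $S(G)\times K_2$ is a dead end as you note, and could simply be deleted.
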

\begin{proof}
 Let $L(G)$ be the line graph of \( G \). We have  \( \lambda_1(L(G)) = \mu - 2 \) by \cite[Proposition 1.4.1]{MR2882891}. Since $2\in \mathbb{S}$, we have  $\mu\in \mathbb{S}$ by Proposition \ref{pro:semi}.
\end{proof}

It is interesting to ask whether the spectral radius of the Laplacian of any graph has finite spectral radius order or not. 

Although it is difficult to determine all elements of $\mathbb{S}$, the limit points of $\mathbb{S}$ can be clarified by the works of Hoffman \cite{MR347860} and Shearer \cite{MR986863}.
\begin{theorem}[\cite{MR347860}]
For \( n \in \mathbb{Z}^+ \), let \( \beta_n \) be the only positive root of the polynomial
\[
P_n(x) = x^{n + 1} - (1 + x + x^2 + \cdots + x^{n - 1}).
\]
Let \( \alpha_n = \beta_n^{1/2} + \beta_n^{-1/2} \). Then
\[
2 = \alpha_1 < \alpha_2 < \cdots
\]
are all limit points of $\mathbb{S}$ smaller than $ \sqrt{2+\sqrt{5}}$.
\end{theorem}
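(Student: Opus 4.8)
The statement to prove is Hoffman's theorem on the limit points of $\mathbb{S}$ below $\sqrt{2+\sqrt 5}$. The plan is to reduce everything to the classical Hoffman--Shearer analysis of graphs with small spectral radius, which I would not reprove from scratch; instead I would cite \cite{MR347860,MR986863} for the structural input and then assemble the limit-point statement. First I would record the two ingredients. The first is Shearer's density result: the set of spectral radii of all graphs is dense in $[\sqrt{2+\sqrt 5},\infty)$; this shows that every point of $[\sqrt{2+\sqrt 5},\infty)$ is a limit point of $\mathbb{S}$, so the interesting range is exactly $[0,\sqrt{2+\sqrt 5})$. The second is Hoffman's classification of the limit points of graph spectral radii that are strictly below $\sqrt{2+\sqrt 5}$: these are precisely the numbers $\alpha_n=\beta_n^{1/2}+\beta_n^{-1/2}$, where $\beta_n$ is the unique positive root of $P_n(x)=x^{n+1}-(1+x+\cdots+x^{n-1})$.

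The core construction I would present explicitly is the family of graphs witnessing that each $\alpha_n$ is a limit point of $\mathbb{S}$. For fixed $n$, consider the ``caterpillar''-type trees $T_{n,k}$: take a path and attach pendant structures so that, as $k\to\infty$, $\lambda_1(T_{n,k})\uparrow\alpha_n$ (these are the graphs appearing in Hoffman's original paper; concretely, one can take a path on $k$ vertices with a pendant path of length $n$ attached at one end, or the analogous configuration). By Lemma~\ref{lemma:pf1}, adding more vertices to a connected graph strictly increases the spectral radius, so the sequence $\lambda_1(T_{n,k})$ is strictly monotone in $k$; a transfer-matrix / characteristic-polynomial computation identifies the limit as the root of the relevant equation, which rearranges to $\alpha_n$. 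Since each $T_{n,k}$ is a genuine finite graph, $\lambda_1(T_{n,k})\in\mathbb{S}$, and therefore $\alpha_n$ is a limit point of $\mathbb{S}$. That $\alpha_1=2$ is the smallest and that $2=\alpha_1<\alpha_2<\cdots$ follows from the fact that $\beta_n$ is increasing in $n$ with $\beta_n\to 1$, hence $\alpha_n$ is increasing (the function $t\mapsto t^{1/2}+t^{-1/2}$ is increasing for $t\ge 1$) and $\alpha_n\to 2+\ldots$; one checks $\beta_1=1$ gives $\alpha_1=2$.

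Conversely, I would argue that $\mathbb{S}$ has no other limit points below $\sqrt{2+\sqrt 5}$. Any limit point $\ell$ of $\mathbb{S}$ with $\ell<\sqrt{2+\sqrt 5}$ is a limit of spectral radii $\lambda_1(G_i)$ of graphs; eventually these lie in a bounded interval $[0,\sqrt{2+\sqrt 5}-\epsilon]$, so $\ell$ is a limit point of the set of all graph spectral radii lying below $\sqrt{2+\sqrt 5}$, and Hoffman's classification forces $\ell\in\{\alpha_n\}_{n\ge 1}$. Combining the two directions gives exactly the asserted list.

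The main obstacle is purely expository rather than mathematical: the substance is entirely Hoffman's and Shearer's, so the real work is (a) pinning down the precise witnessing family $T_{n,k}$ and the characteristic-polynomial recursion whose limiting equation is $P_n$, and (b) being careful that ``limit point of $\mathbb{S}$'' coincides with ``limit point of the set of graph spectral radii,'' which is immediate since $\mathbb{S}$ is, by definition, exactly that set of realizable spectral radii together with the issue that $\kappa$ could be infinite for non-realizable reals — but those simply are not in $\mathbb{S}$, so no discrepancy arises. If one wants a self-contained treatment, the one genuinely technical step is the eigenvalue computation for $T_{n,k}$; I would relegate that to a short lemma or simply cite \cite[]{MR347860} for it.
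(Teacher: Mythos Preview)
The paper does not give its own proof of this theorem: it is stated with a citation to Hoffman \cite{MR347860} and used as a black box, exactly as with the companion Theorem~\ref{shearer} of Shearer. So there is nothing to compare against; your plan of invoking Hoffman's classification directly is precisely what the paper does, only the paper does not even sketch the construction or the converse argument.

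One small correction in your sketch: you write that ``$\beta_n$ is increasing in $n$ with $\beta_n\to 1$''. This cannot be right, since $\beta_1=1$ already and the sequence is increasing. In fact $\beta_n$ increases to the golden ratio $(1+\sqrt 5)/2$, which is what makes $\alpha_n=\beta_n^{1/2}+\beta_n^{-1/2}$ increase to $\sqrt{2+\sqrt 5}$ (check: $\alpha^2-2=\beta+\beta^{-1}=\sqrt 5$). The monotonicity argument via $t\mapsto t^{1/2}+t^{-1/2}$ being increasing for $t\ge 1$ is fine once this is fixed. Otherwise your outline (witnessing trees $T_{n,k}$ with $\lambda_1(T_{n,k})\uparrow\alpha_n$, plus Hoffman's classification for the converse) is the standard route and matches the content of the cited reference.
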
 
\begin{theorem}[\cite{MR986863}]
\label{shearer}
     For any $\lambda\geq \sqrt{2+\sqrt{5}}$, there exists a sequence of graphs $G_1,\ G_2,\ldots,$ such that $G_i$ is a proper subgraph of $G_{i+1}$ and $\Mylim{i\rightarrow\infty}{\lambda_1(G_i)}=\lambda.$ 
\end{theorem}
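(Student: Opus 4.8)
The plan is to first reduce the statement to a one‑sided approximation claim, and then to produce the approximating graphs by a transfer‑matrix (continued‑fraction) analysis of caterpillar‑type trees, which is the heart of Shearer's argument.

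First I would observe that it suffices to prove the following: for every \(\lambda\ge\sqrt{2+\sqrt5}\) and every \(\varepsilon>0\) there is a graph \(H\) with \(\lambda-\varepsilon<\lambda_1(H)<\lambda\). Granting this, pick graphs \(H_i\) with \(\lambda-1/i<\lambda_1(H_i)<\lambda\); since the supremum \(\lambda\) is not attained we may, after passing to a subsequence, assume \(\lambda_1(H_1)<\lambda_1(H_2)<\cdots\to\lambda\). Then the disjoint unions \(G_i:=H_1\sqcup\cdots\sqcup H_i\) satisfy \(G_i\subsetneq G_{i+1}\) and, since the adjacency matrix of a disjoint union is block diagonal, \(\lambda_1(G_i)=\max_{j\le i}\lambda_1(H_j)=\lambda_1(H_i)\to\lambda\), as required. (Shearer's construction in fact produces connected nested trees directly, with monotonicity guaranteed by Lemma \ref{lemma:pf1}, but the reduction above makes the target cleaner.)

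For the approximation claim I would work with trees \(T\) built from a long "spine" path \(u_1-u_2-\cdots-u_k\) by attaching, at a carefully chosen subset of spine vertices, a number \(a_i\ge1\) of pendant leaves (short pendant paths serve equally well). Writing \(g_i\) for the value of a Perron eigenvector at \(u_i\) and eliminating the pendant coordinates, the eigenvalue equation at \(u_i\) reads \(g_{i-1}+g_{i+1}=c_i(\lambda)\,g_i\), where \(c_i(\lambda)=\lambda\) at a plain spine vertex and \(c_i(\lambda)=\lambda-a_i/\lambda\) at a vertex carrying \(a_i\) leaves; equivalently the ratios \(\rho_i:=g_{i+1}/g_i\) evolve by the Möbius iteration \(\rho_i=c_i(\lambda)-1/\rho_{i-1}\), started from \(\rho_0=+\infty\). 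A standard Sturm‑sequence/interlacing argument then shows that \(\lambda_1(T)<\lambda\) exactly when this finite orbit \((\rho_i)_{i\le k}\) stays strictly positive, while \(\lambda_1(T)\) is forced arbitrarily close to \(\lambda\) once the orbit is driven arbitrarily close to the pole at \(0\). Thus everything comes down to designing the pattern of pendants: the plain map \(\psi_\lambda:\rho\mapsto\lambda-1/\rho\) has an attracting fixed point \(\rho^{+}=(\lambda+\sqrt{\lambda^2-4})/2>1\), so long runs of plain vertices drive \(\rho\) toward \(\rho^{+}\), whereas inserting a pendant momentarily replaces \(\psi_\lambda\) by \(\psi_{\lambda-1/\lambda}\), which (for \(\lambda\) near \(2\)) pushes the orbit back down toward the pole. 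I would then choose the pendant positions greedily — a continued‑fraction‑type algorithm keyed to \(\lambda\) — alternating stretches of plain vertices with just enough pendants that the orbit descends but remains positive, so that truncating the construction yields nested trees with spectral radius strictly below \(\lambda\) and tending to \(\lambda\).

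The step I expect to be the main obstacle is precisely this last one. One must prove rigorously that the greedy itinerary can be continued indefinitely without the orbit ever becoming non‑positive, that the resulting spectral radii converge all the way up to \(\lambda\) rather than to some value strictly below it, and that \(\sqrt{2+\sqrt5}\) is the exact threshold — above it the relevant interval map is expanding enough (its image covers a full interval) for the greedy correction to always be available, while below it the same analysis only reaches the discrete set \(\{\alpha_n\}\) of Hoffman's theorem recalled above. This hinges on a quantitative comparison of the two competing Möbius maps — the location of the attracting fixed point \(\rho^{+}\) of \(\psi_\lambda\) against the escape behaviour of the pendant map \(\psi_{\lambda-1/\lambda}\) — and is the genuinely delicate part of the proof.
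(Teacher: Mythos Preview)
The paper does not contain a proof of this theorem: it is quoted from Shearer \cite{MR986863}, and the only additional information the paper supplies is the Remark immediately following, recording that Shearer's construction produces \emph{connected} nested graphs. So there is no ``paper's own proof'' to compare against beyond that one sentence.

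Comparing your proposal with what the Remark records: your reduction step --- manufacturing the nested sequence as disjoint unions $G_i:=H_1\sqcup\cdots\sqcup H_i$ --- is logically valid for the theorem as literally stated, but it yields disconnected graphs. That is a real loss relative to Shearer's construction, and it matters in this paper: the Corollary immediately after the theorem invokes Theorem~\ref{shearer} to obtain a sequence of \emph{connected} $G_i$ and then applies Lemma~\ref{lemma:pf1}, whose strict inequality requires connectedness of the ambient graph. Your parenthetical acknowledgement that one could instead build connected nested trees directly, with monotonicity coming from Lemma~\ref{lemma:pf1}, is precisely what Shearer does and what the paper actually relies on downstream; the disjoint-union shortcut would not suffice for the application.

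Your technical sketch of the caterpillar/transfer-matrix mechanism is broadly faithful to Shearer's argument: one walks along a spine, attaches pendants at well-chosen positions, tracks the eigenvector ratios via a M\"obius recursion, and keeps the orbit positive while driving it toward~$0$. You correctly isolate the crux --- proving the greedy itinerary can be continued indefinitely and that the spectral radii climb all the way to $\lambda$ precisely when $\lambda\ge\sqrt{2+\sqrt5}$ --- and you are candid that this quantitative step is unfinished. As a proof \emph{proposal} this is on the right track; as a proof it remains incomplete exactly where you say it does, and the paper offers no help there since it simply cites the result.
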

\begin{remark}
  By Theorem \ref{shearer}, the set of limit points of $\mathbb{S}$ greater than $\sqrt{2+\sqrt{5}}$ is $$\left[\sqrt{2+\sqrt{5}},+\infty\right).$$
    In fact, Shearer \cite{MR986863} proved it by constructing such a sequence of graphs $G_1,G_2,\ldots$. We point out that all the graphs in his construction are connected.
    \end{remark}
As an application of Theorem \ref{shearer}, we derive the following corollary.
\begin{corollary}
    Given two numbers $\sqrt{2+\sqrt{5}}\leq a<b,$ we have $\sup\{\kappa(\lambda)|\ \lambda\in\mathbb{S}\cap(a,b) \}=+\infty.$
\end{corollary}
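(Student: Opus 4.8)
The plan is to show that the supremum is infinite by producing, for each fixed $N$, some $\lambda \in \mathbb{S}\cap(a,b)$ with $\kappa(\lambda)\geq N$. The natural way to force $\kappa(\lambda)$ large is to realize $\lambda$ only by graphs on many vertices, and the cleanest source of such $\lambda$ is Theorem \ref{shearer}: fix $\lambda_0$ with $\sqrt{2+\sqrt{5}}\leq a<\lambda_0<b$ (for instance the midpoint), and take the sequence of connected graphs $G_1\subsetneq G_2\subsetneq\cdots$ with $\lambda_1(G_i)\to\lambda_0$ supplied by Theorem \ref{shearer} and the following remark. Each $\lambda_1(G_i)$ lies in $\mathbb{S}$, and by Lemma \ref{lemma:pf1} the values $\lambda_1(G_1)<\lambda_1(G_2)<\cdots$ are strictly increasing (since each $G_i$ is a proper subgraph of the connected graph $G_{i+1}$); moreover they are eventually inside $(a,b)$ because they converge to $\lambda_0\in(a,b)$. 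So discarding finitely many terms we get infinitely many distinct elements of $\mathbb{S}\cap(a,b)$.

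Now I would argue that among these infinitely many values, $\kappa$ cannot be bounded. Suppose for contradiction that $\kappa(\lambda_1(G_i))\leq M$ for all $i$. Then each $\lambda_1(G_i)$ is the spectral radius of some graph on at most $M$ vertices. But there are only finitely many graphs on at most $M$ vertices (up to isomorphism), hence only finitely many possible values of the spectral radius among all such graphs. This contradicts the fact that $\{\lambda_1(G_i)\}_i$ takes infinitely many distinct values. Therefore $\sup_i \kappa(\lambda_1(G_i)) = +\infty$, and since all but finitely many of the $\lambda_1(G_i)$ lie in $(a,b)$, we conclude $\sup\{\kappa(\lambda)\mid \lambda\in\mathbb{S}\cap(a,b)\}=+\infty$.

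The argument is essentially a pigeonhole/compactness observation layered on top of Shearer's construction, so I do not anticipate a serious obstacle. The one point that needs a little care is the strict monotonicity of $\lambda_1(G_i)$: this is exactly why the remark's emphasis that "all the graphs in his construction are connected" matters, since Lemma \ref{lemma:pf1} gives strict inequality $\lambda_1(G_i)<\lambda_1(G_{i+1})$ only when $G_{i+1}$ is connected and $G_i$ is a proper subgraph. If one preferred to avoid relying on connectedness, an alternative is to note that the $\lambda_1(G_i)$ converge to $\lambda_0$ without being eventually constant (a graph sequence with strictly increasing vertex sets and bounded spectral radius cannot stabilize), which again yields infinitely many distinct values in $(a,b)$; either route closes the proof.
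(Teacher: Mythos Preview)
Your main argument is correct and is essentially identical to the paper's proof: pick a target in $(a,b)$, invoke Shearer's sequence of connected graphs, use Lemma \ref{lemma:pf1} for strict monotonicity to get infinitely many distinct values in $\mathbb{S}\cap(a,b)$, and finish with the pigeonhole observation that only finitely many spectral radii arise from graphs on at most $M$ vertices. One small caveat: your parenthetical ``alternative'' at the end (that a sequence with strictly increasing vertex sets cannot have stabilizing spectral radius) is false as stated---adding isolated vertices keeps $\lambda_1$ fixed---so you should either drop that remark or keep the connectedness hypothesis you already correctly invoked.
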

\begin{proof}
    Take $\lambda\in(a,b).$ By Theorem \ref{shearer}, there exists a sequence of connected graphs $\{G_i\}_{i=1}^{\infty}$ such that $\Mylim{i\rightarrow\infty}{\lambda_1(G_i)}=\lambda$ and $G_i$ is a proper subgraph of $G_{i+1}.$ Without loss of generality, we assume $\lambda_1(G_i)\in(a,b)$ for any $i\geq 1$. By Lemma \ref{lemma:pf1}, we have $\lambda_1(G_i)<\lambda_1(G_{i+1})$. Thus, there exist infinitely many distinct numbers in $(a,b)$ whose spectral radius orders are finite. However, for any $m\in\mathbb{Z}^+,$ there are only finitely many graphs whose orders are smaller than $m.$ This implies that there are only finitely many distinct numbers with spectral radius orders smaller than $m$. We conclude that $\Mylim{i\rightarrow\infty}{\kappa(\lambda_1(G_i))}=+\infty.$ This completes the proof.
\end{proof}
    
\section{Proof of Theorem \ref{thm:main1}}

\label{sec:main1}
In this section, we show the necessary Condition \ref{condition} is also sufficient for a real number $\lambda$ to have finite spectral radius order in the following two cases:
\begin{enumerate}
    \item [(i)] $\lambda$ is a quadratic algebraic integer;
    \item  [(ii)] $\lambda\leq 2$.
\end{enumerate}
We provide the proofs for the above two cases in the following two subsections, respectively. Indeed, Theorem \ref{thm:main1} follows from Theorem \ref{theorem:realizeA} and Theorem \ref{thm:small2} below.

\subsection{Quadratic algebraic integers}
 Let us first prepare the concepts and results needed in our proof.  

    \begin{definition}[{\cite[Section 2.3]{MR2882891}}]
        \label{QuotientMatrix}
        Suppose \( M=(m_{ab}) \) is a complex matrix whose rows and columns are indexed by \( \{1, \ldots, n\} \). Let \( \{X_1, \ldots, X_m\} \) be a partition of the index set \( \{1, \ldots, n\} \). We partition the matrix \( M \)  according to \( \{X_1, \ldots, X_m\} \), i.e.,
\[
M=\left[
\begin{matrix}
      M_{1,1} & \ldots & M_{1,m} \\
            \vdots & & \vdots \\
            M_{m,1} & \ldots & M_{m,m}
\end{matrix}
\right],
\]
where \( M_{i,j} \) denotes the submatrix of \( M \) formed by the rows with indices in \( X_i \) and the columns with indices in \( X_j \). Then the matrix \( B = (b_{ij}) \), where
 \[b_{ij}:=\frac{1}{|X_i|}\sum_{a\in X_i}\sum_{b\in X_j}m_{ab}, \,\,i,j=1,\ldots,m,\] 
is called the \emph{quotient matrix} of \( M \) with respect to the partition \(\{X_1,\ldots, X_m\}\).
         If  $M_{i,j}$ has a constant row sum, then $B$ is called an \emph{equitable} quotient matrix. 
    \end{definition}
   
     The subsequent theorem establishes a relationship between the spectral radii of a non-negative matrix $M$
 and its equitable quotient matrices.
    \begin{theorem}[{\cite[Theorem 2.5]{YOU201921}}]
        \label{SpectralRadiusRelation}
        Let $M$ be a non-negative matrix and $B$ be an equitable quotient matrix of $M$. Then $\lambda_1(B)=\lambda_1(M).$
    \end{theorem}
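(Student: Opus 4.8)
The plan is to realize the quotient matrix $B$ as the matrix of the action of $M$ on a distinguished invariant subspace, deduce $\lambda_1(B)\le\lambda_1(M)$ by pushing eigenvectors forward, and deduce the reverse inequality by a test-vector estimate. Concretely, write $\{X_1,\dots,X_m\}$ for the underlying partition of the index set $\{1,\dots,n\}$, let $b_{ij}$ be the common row sum of the block $M_{i,j}$ (so $B=(b_{ij})$), and introduce the characteristic matrix $S\in\{0,1\}^{n\times m}$ with $S_{a,i}=1$ exactly when $a\in X_i$. Two elementary observations drive everything. First, equitability is precisely the matrix identity $MS=SB$, checked entrywise: for $a\in X_i$ one has $(MS)_{a,j}=\sum_{b\in X_j}m_{ab}=b_{ij}=(SB)_{a,j}$. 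Second, the columns of $S$ are linearly independent, since the $X_i$ are nonempty and pairwise disjoint, so $Sv=0$ forces $v=0$. Note also that $B$ is entrywise non-negative, so by the Perron--Frobenius theorem $\lambda_1(B)$ is a non-negative real eigenvalue of $B$ equal to the largest modulus among its eigenvalues, and likewise for $M$.

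For the inequality $\lambda_1(B)\le\lambda_1(M)$, take a non-negative eigenvector $v\ne 0$ with $Bv=\lambda_1(B)v$; then $M(Sv)=(MS)v=S(Bv)=\lambda_1(B)\,Sv$ and $Sv\ne 0$, so $\lambda_1(B)$ is an eigenvalue of $M$, hence $\lambda_1(B)\le\lambda_1(M)$. (The same pushforward shows, more generally, that every eigenvalue of $B$ is an eigenvalue of $M$.)

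For the inequality $\lambda_1(M)\le\lambda_1(B)$, which is the heart of the matter, fix a non-negative eigenvector $x\ne 0$ with $Mx=\lambda_1(M)x$ and set $y_i:=\max_{a\in X_i}x_a$, so $y\ge 0$ and $y\ne 0$. For each $i$ with $y_i>0$, choose $a\in X_i$ attaining the maximum; using $m_{ab}\ge 0$, the bound $x_b\le y_j$ for $b\in X_j$, and equitability $\sum_{b\in X_j}m_{ab}=b_{ij}$, one obtains $\lambda_1(M)\,y_i=(Mx)_a\le\sum_j b_{ij}y_j=(By)_i$; and $(By)_i\ge 0=\lambda_1(M)y_i$ trivially when $y_i=0$. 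Hence $By\ge\lambda_1(M)\,y$ coordinatewise, with $y\ge 0$ and $y\ne 0$. I would then invoke the elementary sub-invariance principle: if $B\ge 0$, $y\ge 0$, $y\ne 0$ and $By\ge sy$, then $B^ky\ge s^ky$ by induction, so $\|B^k\|_\infty\ge s^k$ (comparing $\ell^\infty$-norms, since $\|y\|_\infty>0$), whence $\lambda_1(B)=\lim_k\|B^k\|_\infty^{1/k}\ge s$ by Gelfand's formula. Taking $s=\lambda_1(M)$ yields the desired equality.

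I expect the reverse inequality to be the only genuine obstacle, precisely because the Perron vector of $M$ need not be constant on the parts $X_i$ --- equitability controls block row sums but not block column sums --- so one cannot simply restrict it to the quotient; replacing ``restrict'' by ``take the blockwise maximum'' is exactly the move that makes the estimate survive. When $M$ is irreducible (e.g.\ the adjacency matrix of a connected graph, the only situation used later in the paper), this step can alternatively be closed by observing that the non-negative vector $Sv$ from the second paragraph is an eigenvector of $M$ and hence, by uniqueness of the Perron vector, must be the Perron vector of $M$, forcing $\lambda_1(B)=\lambda_1(M)$; the maximum-vector argument is the robust form that also covers reducible $M$.
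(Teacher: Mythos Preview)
Your argument is correct, but there is nothing in the paper to compare it against: the theorem is quoted as \cite[Theorem~2.5]{YOU201921} and invoked without proof. The intertwining identity $MS=SB$, the pushforward of a Perron vector of $B$ for the inequality $\lambda_1(B)\le\lambda_1(M)$, and the blockwise-maximum test vector combined with the sub-invariance bound $By\ge\lambda_1(M)y\Rightarrow\rho(B)\ge\lambda_1(M)$ together give a standard and complete proof of the cited result.
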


Furthermore, we need the following lemma, which is a folklore result.

 \begin{lemma}[Existence of regular graphs]\label{lemma:regular} Let $n,k\in \mathbb{Z}^+$. There exists a \( k \)-regular graph of order \( n \) if and only if \( n \geq k + 1 \) and that \( nk \) is even.  
  \end{lemma}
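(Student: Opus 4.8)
The plan is to prove the folklore characterization of regular graph degree sequences by handling the two directions separately, with the constructive direction being the substance. For the necessity direction, suppose a $k$-regular graph $G$ of order $n$ exists. Every vertex has $k$ neighbors, all distinct from itself, so $k \leq n-1$, i.e. $n \geq k+1$. Moreover, the handshake lemma gives $\sum_{v} \deg(v) = nk = 2|E|$, so $nk$ must be even.

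For sufficiency, assume $n \geq k+1$ and $nk$ even; I will exhibit a $k$-regular graph on the vertex set $\mathbb{Z}_n = \{0,1,\dots,n-1\}$ as a circulant graph. Split into two cases according to the parity forced by $nk$ being even. If $k$ is even, write $k = 2t$ with $1 \leq t \leq \lfloor (n-1)/2 \rfloor$, and let each vertex $i$ be adjacent to $i \pm 1, i \pm 2, \dots, i \pm t \pmod n$; since $t \leq (n-1)/2$ the $2t$ offsets $\{\pm 1,\dots,\pm t\}$ are $2t$ distinct nonzero residues mod $n$, so this is a well-defined simple $2t$-regular graph. If $k$ is odd, then $nk$ even forces $n$ even, say $n = 2s$; write $k = 2t+1$ with the additional edge set being the perfect matching $\{i, i+s\}$ for $i = 0,\dots,s-1$, added on top of the $2t$-regular circulant above. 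One checks $2t+1 \leq n-1$ so $t \leq (n-2)/2 \leq (n-1)/2$, the offset $s = n/2$ is distinct from $\pm 1,\dots,\pm t$, and the matching is genuinely $1$-regular (note $i+s \not\equiv i$ since $s \neq 0$, and the edge $\{i,i+s\}$ is counted once because $\{i+s, i+2s\} = \{i+s, i\}$). Hence the union is a simple $(2t+1)$-regular graph of order $n$.

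I do not anticipate a serious obstacle here — the result is classical — but the one point requiring care is verifying that the offset sets are genuinely distinct residues and that no "edge" degenerates into a loop or gets double-counted, which is exactly where the hypotheses $n \geq k+1$ (controlling $t$) and the parity condition (forcing $n$ even in the odd-$k$ case so that the halving $s = n/2$ makes sense) get used. I would state the construction cleanly as a circulant graph $C_n(S)$ with connection set $S \subseteq \mathbb{Z}_n \setminus \{0\}$ satisfying $S = -S$, note that it is always $|S|$-regular, and then just specify $S = \{\pm 1, \dots, \pm t\}$ or $S = \{\pm 1, \dots, \pm t\} \cup \{n/2\}$ according to the parity of $k$; this keeps the verification short and avoids an explicit adjacency-matrix computation.
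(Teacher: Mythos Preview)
Your proof is correct. The necessity argument is immediate, and your circulant-graph construction for sufficiency is the standard one; the case split on the parity of $k$ is handled carefully, and you correctly identify where the hypotheses $n\geq k+1$ and $nk$ even enter (to ensure the connection set consists of distinct nonzero residues and, in the odd-$k$ case, that $n/2$ is available as the extra offset).

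There is nothing to compare against: the paper states this lemma as a folklore result and gives no proof of its own. Your write-up would serve as a perfectly adequate proof to insert if one were desired.
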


    Let $\lambda$ be a quadratic algebraic integer. That is, $\lambda$ is a root of a quadratic polynomial $x^2+bx+c,$ with $b,c\in \mathbb{Z}$. If $\lambda$ satisfies Condition \ref{condition}, then we must have $b\leq 0$ and $c\leq\left\lfloor \frac{b^2}{4}\right\rfloor.$  Next, we show that Condition \ref{condition} are sufficient for quadratic algebraic integers to have finite spectral radius orders.

    \begin{theorem}
    \label{theorem:realizeA}
        Given any quadratic polynomial $x^2+bx+c$ with $b\leq 0$ and $c\leq \left\lfloor{\frac{b^2}{4}}\right\rfloor.$ Let $\lambda$ be the largest root of this quadratic polynomial. Then $\lambda$ can be realized as the spectral radius of a graph.
    \end{theorem}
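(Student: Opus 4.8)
The plan is to construct a graph $G$ whose adjacency matrix has an equitable partition whose quotient matrix has spectral radius $\lambda$, then invoke Theorem \ref{SpectralRadiusRelation}. Since $\lambda$ is the largest root of $x^2+bx+c$ with $b\le 0$ and $c\le\lfloor b^2/4\rfloor$, write $p:=-b\ge 0$ and $q:=-c$; note $\lambda=\frac{p+\sqrt{p^2-4c}}{2}$ and the constraint $c\le\lfloor p^2/4\rfloor$ guarantees $p^2-4c\ge 0$ so $\lambda$ is real, and $\lambda\ge 0$. I will look for a $2\times 2$ nonnegative integer matrix
\[
B=\begin{pmatrix} a_{11} & a_{12}\\ a_{21} & a_{22}\end{pmatrix}
\]
that is realizable as an equitable quotient matrix of a graph adjacency matrix and whose characteristic polynomial $x^2-(a_{11}+a_{22})x+(a_{11}a_{22}-a_{12}a_{21})$ equals $x^2+bx+c$, i.e. $a_{11}+a_{22}=-b=p$ and $a_{11}a_{22}-a_{12}a_{21}=c$. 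The first step is therefore a number-theoretic one: choose nonnegative integers $a_{11},a_{22}$ with $a_{11}+a_{22}=p$ and then nonnegative integers $a_{12},a_{21}$ with $a_{12}a_{21}=a_{11}a_{22}-c$; since $a_{11}a_{22}$ can be made as large as $\lfloor p^2/4\rfloor\ge c$ by a balanced split, the product $a_{11}a_{22}-c$ is a nonnegative integer, and one can simply take $a_{21}=1$, $a_{12}=a_{11}a_{22}-c$ (with a small separate argument when $p=0$, where $\lambda=\sqrt{-c}$ and $K_{\lambda,\lambda}$-type or complete-multipartite constructions handle it, or one reduces via $a_{11}=a_{22}=0$, $a_{12}a_{21}=-c$).

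The second step is to realize this quotient matrix $B$ by an actual graph. Take two vertex classes $X_1,X_2$ with $|X_1|=N_1$, $|X_2|=N_2$. Inside $X_i$ place an $a_{ii}$-regular graph on $N_i$ vertices (possible by Lemma \ref{lemma:regular} once $N_i\ge a_{ii}+1$ and $N_ia_{ii}$ is even — arrange parities by taking $N_i$ large and even). Between $X_1$ and $X_2$ we need a biregular bipartite graph in which each vertex of $X_1$ has $a_{12}$ neighbors in $X_2$ and each vertex of $X_2$ has $a_{21}$ neighbors in $X_1$; this forces $N_1a_{12}=N_2a_{21}$, which is a linear Diophantine condition easily met by choosing $N_1,N_2$ proportional to $a_{21},a_{12}$ (scaled up and adjusted for parity). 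Such a biregular bipartite graph exists whenever the degree/size constraints are consistent and $N_1\ge a_{21}$, $N_2\ge a_{12}$ — an explicit ``rotation'' construction (index the vertices cyclically and join $v_{1,k}$ to a block of $a_{12}$ consecutive vertices in $X_2$) works, analogous to the construction sketched in the commented-out portion of the paper. The resulting partition $\{X_1,X_2\}$ is equitable with quotient matrix exactly $B$, so $\lambda_1(G)=\lambda_1(B)=\lambda$ by Theorem \ref{SpectralRadiusRelation}, and since $B$ is nonnegative with the larger eigenvalue $\lambda$ this $\lambda$ is indeed the spectral radius.

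The main obstacle is the simultaneous satisfaction of all the integrality and parity constraints: we need $N_1,N_2$ with $N_1a_{12}=N_2a_{21}$, with $N_i\ge a_{ii}+1$, with $N_ia_{ii}$ even so the regular graphs inside the classes exist, and with the bipartite biregular graph between the classes existing. Each condition individually is mild, but I should verify they can be met together; the standard trick is to start from the minimal solution $N_1=a_{21}/\gcd$, $N_2=a_{12}/\gcd$ and multiply by a large common factor $t$ (even, and divisible by enough to kill the parity issues in $N_ia_{ii}$). A secondary nuisance is the degenerate cases: $a_{12}=0$ or $a_{21}=0$ (then the graph is disconnected or a single regular graph, and $\lambda=\max(a_{11},a_{22})$, which must then equal $\lambda$ — consistent since the constraints force this), and $\lambda<1$ is impossible but also excluded because $\lambda\ge 0$ integer-combinatorially forces $\lambda\in\{0\}\cup[1,\infty)$ here; I would dispatch these by hand. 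Once the bookkeeping is done, the spectral identity is immediate.
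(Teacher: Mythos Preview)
The proposal is correct and takes essentially the same approach as the paper: both construct a $2\times 2$ nonnegative integer matrix with characteristic polynomial $x^2+bx+c$, realize it as the equitable quotient matrix of a graph built from two regular pieces joined by a biregular bipartite graph, and invoke Theorem~\ref{SpectralRadiusRelation}. The paper simply commits to the specific choice $a_{12}=1$, $a_{11}=m:=-\lfloor b/2\rfloor$, $a_{22}\in\{m,m-1\}$ (according to the parity of $b$), sets $n_1=Ma_{21}$, $n_2=Ma_{12}$ for a large even $M$, and disposes of the boundary case $c=\lfloor b^2/4\rfloor$ separately via $K_{m+1}$; your version keeps the same skeleton but leaves these bookkeeping choices generic.
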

    \begin{proof}
    We divide the proof into two cases. Let us denote $m:=-\left\lfloor \frac{b}{2} \right\rfloor$.
 \\
 {\textbf{Case 1:}} $c=\left\lfloor \frac{b^2}{4} \right\rfloor$. Then $\lambda=m=\lambda_1(K_{m+1})$.
 \\
{\textbf{Case 2:}} $c<\left\lfloor \frac{b^2}{4} \right\rfloor$. 
If \(b = -2m\), we define  
    \[
    A: =(a_{ij})= \begin{bmatrix} 
    m & 1 \\ 
    m^2 - c & m 
    \end{bmatrix}.
    \]  
Otherwise, if \(b=-2m+1\), we define 
 \[
    A:=(a_{ij}) = \begin{bmatrix} 
    m & 1 \\ 
    m^2 - m - c & m - 1 
    \end{bmatrix}.
    \]  
    Notice that the polynomial $x^2+bx+c$ is the characteristic polynomial of $A$. Let $M$ be an \emph{even} integer larger than $m$. Set $n_1:=Ma_{21}$ and $n_2:=Ma_{12}$. By Lemma \ref{lemma:regular}, there exist a $a_{11}$-regular graph $G_1$ and $a_{22}$-regular graph $G_2$ of order $n_1$ and $n_2$, respectively. Denote the vertex set of $G_i$ by $V_{G_i}:=\{v_{ij}\ \vert \ j=1,\ldots,n_i\}$, $i=1,2$.   We construct a graph \( G \) with vertex set $V_{G_1}\cup V_{G_2}$ as follows. Two vertices $v_{ik},\ v_{jl}$ are adjacent if and only if
        \begin{center}
            either $i=j$ and $v_{ik}\sim v_{jl}$  in $G_i$, or $i\neq j$ and $l=\left(\left\lceil \frac{k}{a_{ji}}\right\rceil-1\right) a_{ij}+b$ for some $1\leq b\leq a_{ij}$.
        \end{center}
 By construction, the matrix $A$ is the equitable quotient matrix of the adjacency matrix $A_G$, with respect to the partition $\{V_{G_1},V_{G_2} \}$. By Theorem \ref{SpectralRadiusRelation}, it holds that $\lambda_1(A)=\lambda_1(G).$

This completes the proof.
    \end{proof}

We provide an example to illustrate the proof of Theorem \ref{theorem:realizeA}.
    \begin{example}\label{example:figure}
        Consider the quadratic polynomial $x^2-2x-2$. Its largest positive root is $1+\sqrt{3}.$ 
        This is the case $b=-2$ and $c=-2<\left\lfloor\frac{b^2}{4}\right\rfloor$ in the proof of Theorem \ref{theorem:realizeA}. We consider the matrix
    \[A=\left[
        \begin{array}{cc}
            1 & 1\\
            3 & 1
        \end{array}
    \right].\]
Set $M=2,\ n_1=6,\ n_2=2$.
Then we can take $G_1$ consisting of 3 copies of $K_2$ and $G_2=K_2$. We construct a new graph $G$ from $G_1$ and $G_2$ according to the strategy described in the proof of Theorem \ref{theorem:realizeA}. We illustrate this construction in Figure \ref{ConstructionOfG}. In conclusion, we obtain a graph $G$ satisfying $\lambda_1(A)=\lambda_1(G)=1+\sqrt{3}.$\\
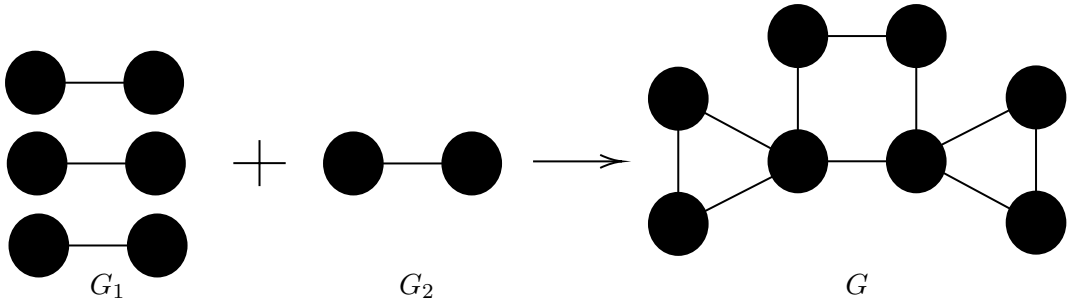
\begin{figure}[H]
    \centering
    \tikzset{every picture/.style={line width=0.75pt}} 

\begin{tikzpicture}[x=0.75pt,y=0.75pt,yscale=-1,xscale=1]

\draw  [fill={rgb, 255:red, 0; green, 0; blue, 0 }  ,fill opacity=1 ] (233,172.68) .. controls (233,164.07) and (239.55,157.08) .. (247.64,157.08) .. controls (255.72,157.08) and (262.27,164.07) .. (262.27,172.68) .. controls (262.27,181.3) and (255.72,188.28) .. (247.64,188.28) .. controls (239.55,188.28) and (233,181.3) .. (233,172.68) -- cycle ;
\draw  [fill={rgb, 255:red, 0; green, 0; blue, 0 }  ,fill opacity=1 ] (292.1,172.68) .. controls (292.1,164.07) and (298.65,157.08) .. (306.74,157.08) .. controls (314.82,157.08) and (321.37,164.07) .. (321.37,172.68) .. controls (321.37,181.3) and (314.82,188.28) .. (306.74,188.28) .. controls (298.65,188.28) and (292.1,181.3) .. (292.1,172.68) -- cycle ;
\draw [fill={rgb, 255:red, 0; green, 0; blue, 0 }  ,fill opacity=1 ]   (262.27,172.68) -- (292.1,172.68) ;
\draw  [fill={rgb, 255:red, 0; green, 0; blue, 0 }  ,fill opacity=1 ] (75.14,172.68) .. controls (75.14,164.07) and (81.69,157.08) .. (89.78,157.08) .. controls (97.86,157.08) and (104.41,164.07) .. (104.41,172.68) .. controls (104.41,181.3) and (97.86,188.28) .. (89.78,188.28) .. controls (81.69,188.28) and (75.14,181.3) .. (75.14,172.68) -- cycle ;
\draw  [fill={rgb, 255:red, 0; green, 0; blue, 0 }  ,fill opacity=1 ] (134.24,172.68) .. controls (134.24,164.07) and (140.79,157.08) .. (148.88,157.08) .. controls (156.96,157.08) and (163.51,164.07) .. (163.51,172.68) .. controls (163.51,181.3) and (156.96,188.28) .. (148.88,188.28) .. controls (140.79,188.28) and (134.24,181.3) .. (134.24,172.68) -- cycle ;
\draw [fill={rgb, 255:red, 0; green, 0; blue, 0 }  ,fill opacity=1 ]   (104.41,172.68) -- (134.24,172.68) ;

\draw  [fill={rgb, 255:red, 0; green, 0; blue, 0 }  ,fill opacity=1 ] (76.01,213.94) .. controls (76.01,205.32) and (82.56,198.34) .. (90.65,198.34) .. controls (98.73,198.34) and (105.28,205.32) .. (105.28,213.94) .. controls (105.28,222.55) and (98.73,229.54) .. (90.65,229.54) .. controls (82.56,229.54) and (76.01,222.55) .. (76.01,213.94) -- cycle ;
\draw  [fill={rgb, 255:red, 0; green, 0; blue, 0 }  ,fill opacity=1 ] (135.11,213.94) .. controls (135.11,205.32) and (141.66,198.34) .. (149.75,198.34) .. controls (157.83,198.34) and (164.38,205.32) .. (164.38,213.94) .. controls (164.38,222.55) and (157.83,229.54) .. (149.75,229.54) .. controls (141.66,229.54) and (135.11,222.55) .. (135.11,213.94) -- cycle ;
\draw [fill={rgb, 255:red, 0; green, 0; blue, 0 }  ,fill opacity=1 ]   (105.28,213.94) -- (135.11,213.94) ;

\draw  [fill={rgb, 255:red, 0; green, 0; blue, 0 }  ,fill opacity=1 ] (74.27,132.06) .. controls (74.27,123.44) and (80.82,116.46) .. (88.9,116.46) .. controls (96.99,116.46) and (103.54,123.44) .. (103.54,132.06) .. controls (103.54,140.67) and (96.99,147.65) .. (88.9,147.65) .. controls (80.82,147.65) and (74.27,140.67) .. (74.27,132.06) -- cycle ;
\draw  [fill={rgb, 255:red, 0; green, 0; blue, 0 }  ,fill opacity=1 ] (133.37,132.06) .. controls (133.37,123.44) and (139.92,116.46) .. (148.01,116.46) .. controls (156.09,116.46) and (162.64,123.44) .. (162.64,132.06) .. controls (162.64,140.67) and (156.09,147.65) .. (148.01,147.65) .. controls (139.92,147.65) and (133.37,140.67) .. (133.37,132.06) -- cycle ;
\draw [fill={rgb, 255:red, 0; green, 0; blue, 0 }  ,fill opacity=1 ]   (103.54,132.06) -- (133.37,132.06) ;

\draw  [fill={rgb, 255:red, 0; green, 0; blue, 0 }  ,fill opacity=1 ] (409.83,124.49) .. controls (417.92,124.49) and (424.47,131.48) .. (424.47,140.09) .. controls (424.47,148.71) and (417.92,155.69) .. (409.83,155.69) .. controls (401.75,155.69) and (395.2,148.71) .. (395.2,140.09) .. controls (395.2,131.48) and (401.75,124.49) .. (409.83,124.49) -- cycle ;
\draw  [fill={rgb, 255:red, 0; green, 0; blue, 0 }  ,fill opacity=1 ] (409.83,187.48) .. controls (417.92,187.48) and (424.47,194.47) .. (424.47,203.08) .. controls (424.47,211.7) and (417.92,218.68) .. (409.83,218.68) .. controls (401.75,218.68) and (395.2,211.7) .. (395.2,203.08) .. controls (395.2,194.47) and (401.75,187.48) .. (409.83,187.48) -- cycle ;
\draw [fill={rgb, 255:red, 0; green, 0; blue, 0 }  ,fill opacity=1 ]   (409.83,155.69) -- (409.83,187.48) ;

\draw  [fill={rgb, 255:red, 0; green, 0; blue, 0 }  ,fill opacity=1 ] (454.89,171.59) .. controls (454.89,162.97) and (461.44,155.99) .. (469.53,155.99) .. controls (477.61,155.99) and (484.16,162.97) .. (484.16,171.59) .. controls (484.16,180.2) and (477.61,187.19) .. (469.53,187.19) .. controls (461.44,187.19) and (454.89,180.2) .. (454.89,171.59) -- cycle ;
\draw  [fill={rgb, 255:red, 0; green, 0; blue, 0 }  ,fill opacity=1 ] (513.99,171.59) .. controls (513.99,162.97) and (520.54,155.99) .. (528.63,155.99) .. controls (536.71,155.99) and (543.26,162.97) .. (543.26,171.59) .. controls (543.26,180.2) and (536.71,187.19) .. (528.63,187.19) .. controls (520.54,187.19) and (513.99,180.2) .. (513.99,171.59) -- cycle ;
\draw [fill={rgb, 255:red, 0; green, 0; blue, 0 }  ,fill opacity=1 ]   (484.16,171.59) -- (513.99,171.59) ;

\draw  [fill={rgb, 255:red, 0; green, 0; blue, 0 }  ,fill opacity=1 ] (454.89,108.6) .. controls (454.89,99.98) and (461.44,93) .. (469.53,93) .. controls (477.61,93) and (484.16,99.98) .. (484.16,108.6) .. controls (484.16,117.21) and (477.61,124.2) .. (469.53,124.2) .. controls (461.44,124.2) and (454.89,117.21) .. (454.89,108.6) -- cycle ;
\draw  [fill={rgb, 255:red, 0; green, 0; blue, 0 }  ,fill opacity=1 ] (513.99,108.6) .. controls (513.99,99.98) and (520.54,93) .. (528.63,93) .. controls (536.71,93) and (543.26,99.98) .. (543.26,108.6) .. controls (543.26,117.21) and (536.71,124.2) .. (528.63,124.2) .. controls (520.54,124.2) and (513.99,117.21) .. (513.99,108.6) -- cycle ;
\draw [fill={rgb, 255:red, 0; green, 0; blue, 0 }  ,fill opacity=1 ]   (484.16,108.6) -- (513.99,108.6) ;

\draw [fill={rgb, 255:red, 0; green, 0; blue, 0 }  ,fill opacity=1 ]   (469.53,124.2) -- (469.53,155.99) ;
\draw [fill={rgb, 255:red, 0; green, 0; blue, 0 }  ,fill opacity=1 ]   (528.63,124.2) -- (528.63,155.99) ;
\draw [fill={rgb, 255:red, 0; green, 0; blue, 0 }  ,fill opacity=1 ]   (422.79,147.09) -- (456.48,165.35) ;
\draw [fill={rgb, 255:red, 0; green, 0; blue, 0 }  ,fill opacity=1 ]   (422.79,196.85) -- (457.07,179.21) ;
\draw  [fill={rgb, 255:red, 0; green, 0; blue, 0 }  ,fill opacity=1 ] (588.41,218.05) .. controls (580.33,218.05) and (573.77,211.07) .. (573.77,202.45) .. controls (573.77,193.84) and (580.33,186.85) .. (588.41,186.85) .. controls (596.49,186.85) and (603.05,193.84) .. (603.05,202.45) .. controls (603.05,211.07) and (596.49,218.05) .. (588.41,218.05) -- cycle ;
\draw  [fill={rgb, 255:red, 0; green, 0; blue, 0 }  ,fill opacity=1 ] (588.41,155.06) .. controls (580.33,155.06) and (573.77,148.08) .. (573.77,139.46) .. controls (573.77,130.85) and (580.33,123.86) .. (588.41,123.86) .. controls (596.49,123.86) and (603.05,130.85) .. (603.05,139.46) .. controls (603.05,148.08) and (596.49,155.06) .. (588.41,155.06) -- cycle ;
\draw [fill={rgb, 255:red, 0; green, 0; blue, 0 }  ,fill opacity=1 ]   (588.41,186.85) -- (588.41,155.06) ;

\draw [fill={rgb, 255:red, 0; green, 0; blue, 0 }  ,fill opacity=1 ]   (575.46,195.46) -- (541.77,177.19) ;
\draw [fill={rgb, 255:red, 0; green, 0; blue, 0 }  ,fill opacity=1 ]   (575.46,145.7) -- (541.18,163.33) ;

\draw  [fill={rgb, 255:red, 0; green, 0; blue, 0 }  ,fill opacity=1 ] (187.74,172.52) -- (213.87,172.52)(200.8,160.95) -- (200.8,184.08) ;
\draw [fill={rgb, 255:red, 0; green, 0; blue, 0 }  ,fill opacity=1 ]   (337.35,171.64) -- (379.78,171.64) ;
\draw [shift={(381.78,171.64)}, rotate = 180] [color={rgb, 255:red, 0; green, 0; blue, 0 }  ][line width=0.75]    (10.93,-3.29) .. controls (6.95,-1.4) and (3.31,-0.3) .. (0,0) .. controls (3.31,0.3) and (6.95,1.4) .. (10.93,3.29)   ;

\draw (491.83,226.75) node [anchor=north west][inner sep=0.75pt]   [align=left] {$\displaystyle G$};
\draw (114.59,226.75) node [anchor=north west][inner sep=0.75pt]   [align=left] {$\displaystyle G_{1}$};
\draw (269.08,226.75) node [anchor=north west][inner sep=0.75pt]   [align=left] {$\displaystyle G_{2}$};

\end{tikzpicture}
    \caption{The construction of $G$ whose spectral radius is $1+\sqrt{3}$}
    \label{ConstructionOfG}
\end{figure}
    \end{example}

\subsection{Numbers no larger than $2$}

The next theorem provides a characterization of algebraic integers, whose algebraic conjugates all lie in the interval $[-2,2].$ This theorem is due to Kronecker.

    \begin{theorem}[\cite{MR1578994}]\label{thm:Kronecker}
        Let \( f(x) \) be a monic polynomial with integer coefficients, whose roots are all real. If every root of \( f(x) \) lies in \([-2, 2]\), then these roots must take the form \( 2\cos\frac{2\pi k}{n} \), where \( k \) and \( n \) are integers.  
    \end{theorem}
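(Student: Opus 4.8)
The plan is to reduce Theorem~\ref{thm:Kronecker} to Kronecker's classical theorem on algebraic integers all of whose conjugates lie on the unit circle, using the substitution $x = z + z^{-1}$, which carries the interval $[-2,2]$ onto the unit circle. Write $f(x) = \prod_{i=1}^{d}(x-\alpha_i)$ with every $\alpha_i \in [-2,2]$, and for each $i$ pick $z_i$ on the unit circle with $z_i + z_i^{-1} = \alpha_i$ (concretely $z_i = e^{\mathrm{i}\theta_i}$ when $\alpha_i = 2\cos\theta_i$). Consider
\[
g(z) := z^{d} f\!\left(z + z^{-1}\right) = \prod_{i=1}^{d}\left(z^{2} - \alpha_i z + 1\right).
\]
Since $f \in \mathbb{Z}[x]$ and $(z+z^{-1})^{j} \in \mathbb{Z}[z,z^{-1}]$, the Laurent polynomial $f(z+z^{-1})$ has integer coefficients and its lowest-degree term is a constant multiple of $z^{-d}$; hence $g$ is an honest polynomial, monic of degree $2d$, with integer coefficients, whose $2d$ roots are exactly $z_1, z_1^{-1}, \ldots, z_d, z_d^{-1}$, all of modulus $1$. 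In particular the minimal polynomial of each $z_i$ divides $g$, so every algebraic conjugate of $z_i$ is a root of $g$ and therefore lies on the unit circle.

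Next I would invoke Kronecker's lemma: a nonzero algebraic integer all of whose conjugates have modulus at most $1$ is a root of unity. Applied to each $z_i$ (which is nonzero, of modulus $1$), this yields $z_i = e^{2\pi\mathrm{i} k_i / n_i}$ for suitable integers $k_i, n_i$, and then $\alpha_i = z_i + z_i^{-1} = 2\cos(2\pi k_i/n_i)$, as claimed; the boundary cases $\alpha_i = \pm 2$ (where $z_i = z_i^{-1} = \pm 1$) are covered since $2 = 2\cos 0$ and $-2 = 2\cos \pi$.

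The only substantive point is Kronecker's lemma itself, which I expect to be the main obstacle to present cleanly, and which I would establish by the classical finiteness--pigeonhole argument. Let $\zeta_1,\ldots,\zeta_m$ be the conjugates of $z_i$, all of modulus $\le 1$. For each $k \ge 1$ the polynomial $q_k(z) := \prod_{j=1}^{m}(z - \zeta_j^{k})$ has integer coefficients, because these coefficients are the elementary symmetric polynomials in $\zeta_1^k,\ldots,\zeta_m^k$, hence integer polynomials in the (integer) coefficients of the minimal polynomial of $z_i$; moreover, since $|\zeta_j^{k}| \le 1$, the coefficient of $z^{m-\ell}$ in $q_k$ is bounded in absolute value by $\binom{m}{\ell}$. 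Thus the polynomials $q_k$ lie in a finite set, so $q_{k_1} = q_{k_2}$ for some $k_1 < k_2$; equality of the root multisets gives a permutation $\pi$ of $\{1,\ldots,m\}$ with $\zeta_j^{k_2} = \zeta_{\pi(j)}^{k_1}$ for all $j$, and iterating along the $\pi$-orbit through a fixed index produces $z_i^{k_2^{N}} = z_i^{k_1^{N}}$ where $N$ is the order of $\pi$, whence $z_i^{k_2^{N} - k_1^{N}} = 1$. Since $k_2^{N} - k_1^{N} > 0$, this shows $z_i$ is a root of unity, and everything outside this lemma is routine bookkeeping about the substitution $x = z+z^{-1}$.
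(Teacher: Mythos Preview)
The paper does not supply its own proof of Theorem~\ref{thm:Kronecker}; the result is simply quoted with a citation to Kronecker and used as a black box in the proof of Theorem~\ref{thm:small2}. Your argument is correct and is essentially the classical proof: the substitution $x=z+z^{-1}$ converts the problem on $[-2,2]$ into Kronecker's theorem on algebraic integers whose conjugates all lie on the unit circle, and your pigeonhole argument with the polynomials $q_k(z)=\prod_j(z-\zeta_j^k)$ is the standard way to establish the latter. One small point worth tightening: when you conclude $z_i^{k_2^N-k_1^N}=1$ you are implicitly using $z_i\neq 0$, which follows because the minimal polynomial of a nonzero algebraic integer has nonzero constant term (or, in this specific application, simply because $|z_i|=1$).
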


   For $n\geq 2$, let $\Psi_n(x)$ be the monic minimal polynomial of $\cos\frac{2\pi}{n}.$  The next lemma gives all roots of $\Psi_n(x)$.
    \begin{lemma}[\cite{MR1215534}]\label{lem:Psin} For $n\geq 2,$ the roots of $\Psi_n(x)$ are $\cos\frac{2\pi k}{n},$ where $1\leq k\leq \frac{n}{2}$ and $(k,n)=1.$
    \end{lemma}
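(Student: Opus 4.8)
The statement to prove is Lemma \ref{lem:Psin}: for $n\geq 2$, the roots of $\Psi_n(x)$, the monic minimal polynomial of $\cos\frac{2\pi}{n}$, are exactly $\cos\frac{2\pi k}{n}$ with $1\leq k\leq n/2$ and $\gcd(k,n)=1$.

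\medskip

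The plan is to work with the primitive $n$-th roots of unity and descend to their real parts. First I would recall that the primitive $n$-th roots of unity are $\zeta^k = e^{2\pi i k/n}$ with $1\leq k\leq n$ and $\gcd(k,n)=1$, and they are precisely the roots of the $n$-th cyclotomic polynomial $\Phi_n(x)$, which is irreducible over $\mathbb{Q}$ of degree $\varphi(n)$. The key observation is that $\cos\frac{2\pi k}{n} = \frac{1}{2}(\zeta^k + \zeta^{-k})$, so $\cos\frac{2\pi}{n}$ lies in the maximal real subfield $\mathbb{Q}(\zeta)^+ = \mathbb{Q}(\zeta + \zeta^{-1})$ of $\mathbb{Q}(\zeta)$. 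Since complex conjugation is the unique nontrivial element of $\mathrm{Gal}(\mathbb{Q}(\zeta)/\mathbb{Q}(\zeta)^+)$, this subfield has degree $\varphi(n)/2$ over $\mathbb{Q}$ when $n\geq 3$ (and one handles $n=2$ trivially, where $\Psi_2(x) = x+1$).

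\medskip

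Next I would identify the Galois conjugates of $\cos\frac{2\pi}{n}$ over $\mathbb{Q}$. The Galois group $\mathrm{Gal}(\mathbb{Q}(\zeta)/\mathbb{Q})\cong (\mathbb{Z}/n\mathbb{Z})^\times$ acts on $\zeta$ by $\zeta\mapsto\zeta^k$ for $\gcd(k,n)=1$, hence sends $\cos\frac{2\pi}{n} = \frac12(\zeta+\zeta^{-1})$ to $\frac12(\zeta^k+\zeta^{-k}) = \cos\frac{2\pi k}{n}$. Therefore the full set of Galois conjugates of $\cos\frac{2\pi}{n}$ is $\{\cos\frac{2\pi k}{n} : \gcd(k,n)=1\}$. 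Since $\cos\frac{2\pi k}{n} = \cos\frac{2\pi(n-k)}{n}$ and $k\mapsto n-k$ is a fixed-point-free involution on the residues coprime to $n$ (for $n\geq 3$), these conjugates are enumerated without repetition by $1\leq k\leq n/2$ with $\gcd(k,n)=1$; there are exactly $\varphi(n)/2$ of them, matching the degree computed above. Because the conjugates over $\mathbb{Q}$ of an algebraic number are exactly the roots of its minimal polynomial (each appearing once, as $\Psi_n$ is by definition monic and irreducible), this gives precisely the claimed root set. I would also note distinctness: $\cos\frac{2\pi k}{n} = \cos\frac{2\pi k'}{n}$ with $1\leq k,k'\leq n/2$ forces $k=k'$, so there is no multiplicity issue.

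\medskip

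The main obstacle — really the only subtle point — is the degree bookkeeping: verifying that $[\mathbb{Q}(\cos\frac{2\pi}{n}):\mathbb{Q}] = \varphi(n)/2$ for $n\geq 3$, i.e. that $\cos\frac{2\pi}{n}$ is not fixed by any proper subgroup larger than $\{1,\text{conj}\}$, equivalently that the $\varphi(n)/2$ numbers $\cos\frac{2\pi k}{n}$ listed are genuinely distinct and form a single Galois orbit. This follows cleanly from the fact that complex conjugation generates $\mathrm{Gal}(\mathbb{Q}(\zeta)/\mathbb{Q}(\zeta)^+)$ together with irreducibility of $\Phi_n$; alternatively one can cite the standard fact that $\mathbb{Q}(\zeta_n)^+ = \mathbb{Q}(\cos\frac{2\pi}{n})$. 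Since the statement is attributed to \cite{MR1215534}, a short proof along these lines, or simply a reference, suffices. For small cases ($n=2$) one checks directly that $\Psi_2(x)=x+1$ has the single root $\cos\pi=-1 = \cos\frac{2\pi\cdot 1}{2}$, consistent with the formula.
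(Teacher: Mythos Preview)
Your argument is correct and is the standard Galois-theoretic proof of this fact: identify $\cos\frac{2\pi}{n}=\tfrac12(\zeta_n+\zeta_n^{-1})$ as a generator of the maximal real subfield $\mathbb{Q}(\zeta_n)^+$, use irreducibility of $\Phi_n$ to compute $[\mathbb{Q}(\zeta_n)^+:\mathbb{Q}]=\varphi(n)/2$ for $n\geq 3$, and read off the Galois orbit via the action of $(\mathbb{Z}/n\mathbb{Z})^\times$ modulo the involution $k\leftrightarrow n-k$. The bookkeeping (distinctness on $1\leq k\leq n/2$ by monotonicity of cosine on $[0,\pi]$, the fixed-point-freeness of $k\mapsto n-k$ on units for $n\geq 3$, and the separate check for $n=2$) is all in order.

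There is nothing to compare against in the paper itself: Lemma~\ref{lem:Psin} is stated with a citation to \cite{MR1215534} and no proof is given. Your write-up supplies exactly the argument one would expect behind that citation, and you already note this at the end of your proposal. If anything, your proof is more than what the paper requires, since the authors are content to quote the result.
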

In particular, the above lemma tells that the polynomial $\Psi_n(x)$ has $\frac{\phi(n)}{2}$ roots, where $\phi(n)$ is Euler's totient function.

       Next, we prove the following theorem.
    \begin{theorem}\label{thm:small2}
        Any real number $\lambda\leq 2$  satisfying Condition \ref{condition} has finite spectral radius order.
    \end{theorem}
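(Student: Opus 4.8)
The plan is to use Kronecker's theorem (Theorem \ref{thm:Kronecker}) together with Lemma \ref{lem:Psin} to reduce the statement to constructing, for each relevant $n$, a graph whose spectral radius equals the prescribed value $2\cos\frac{2\pi k}{n}$. First I would dispose of the trivial sub-cases: $\lambda=0=\lambda_1(K_1)$, $\lambda=1=\lambda_1(K_2)$, and $\lambda=2=\lambda_1(C_m)$ for any cycle $C_m$ (equivalently $\lambda_1$ of the path/cycle, or note $2\in\mathbb{S}$ was already used in Section \ref{sec:BasicResult}). So assume $0<\lambda<2$ and $\lambda$ satisfies Condition \ref{condition}. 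By Theorem \ref{thm:Kronecker}, every algebraic conjugate of $\lambda$ has the form $2\cos\frac{2\pi j}{n}$ for a common $n$; since $\lambda$ is the largest conjugate and $\lambda<2$, we get $\lambda=2\cos\frac{2\pi}{n}$ for some $n\geq 5$ (the values $n=1,2,3,4,6$ give $\lambda\in\{2,-2,-1,0,1\}$, already handled or excluded by positivity). Moreover Condition \ref{condition}(2) forces $\lambda\geq|2\cos\frac{2\pi k}{n}|$ for all $k$ coprime to $n$, and by Lemma \ref{lem:Psin} the minimal polynomial of $\lambda$ is exactly $\Psi_n(x)$.

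The heart of the argument is then to realize $2\cos\frac{2\pi}{n}$ as the spectral radius of some graph. The natural candidates are the paths and cycles, whose spectra are classical: $\lambda_1(P_{m})=2\cos\frac{\pi}{m+1}$ and the eigenvalues of $C_m$ are $2\cos\frac{2\pi j}{m}$. Since $2\cos\frac{2\pi}{n}=2\cos\frac{\pi}{n/2}$ when $n$ is even, the path $P_{n/2-1}$ has spectral radius exactly $2\cos\frac{2\pi}{n}$ for even $n$; this settles the even case outright. For odd $n$, $2\cos\frac{2\pi}{n}$ is not of the form $2\cos\frac{\pi}{m+1}$, so I would instead look for it among eigenvalues of other graphs. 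One clean route: observe that $2\cos\frac{2\pi}{n}$ is an eigenvalue of the cycle $C_n$, but not its \emph{largest} eigenvalue, so $C_n$ itself does not work; however, one may pass to an appropriate quotient. Concretely, the path-like "folded" structure works: I would check that $2\cos\frac{2\pi}{n} = \lambda_1$ of a path with a loop at one end, or — staying within simple graphs — use the equitable-quotient machinery (Theorem \ref{SpectralRadiusRelation}) exactly as in the proof of Theorem \ref{theorem:realizeA}. The key observation is that the tridiagonal matrix $T$ with $1$'s on the off-diagonal, zeros on the diagonal except a single $1$ in one corner, of size $\frac{n-1}{2}$, has characteristic polynomial related to $\Psi_n$, hence largest eigenvalue $2\cos\frac{2\pi}{n}$; this $T$ is a non-negative integer acyclic matrix, and the construction in Theorem \ref{theorem:realizeA}'s proof (blow up each vertex into a regular graph of suitable size, using Lemma \ref{lemma:regular}) produces an honest graph $G$ with $A$ as an equitable quotient matrix and $\lambda_1(G)=2\cos\frac{2\pi}{n}$.

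I expect the main obstacle to be the odd-$n$ case: pinning down the right finite "seed" matrix whose largest eigenvalue is precisely $2\cos\frac{2\pi}{n}$ (not merely \emph{an} eigenvalue), and verifying its characteristic polynomial is $\Psi_n$ up to a factor whose roots are all smaller in absolute value. The cleanest verification uses the Chebyshev-type recursion for the characteristic polynomials of tridiagonal matrices: if $D_m$ denotes the determinant of the $m\times m$ path matrix $xI-A(P_m)$, then $D_m = U_m(x/2)$ (Chebyshev polynomial of the second kind), and adjoining a corner loop replaces one boundary condition, yielding a combination like $U_{m}(x/2)-U_{m-1}(x/2)$ whose roots are $2\cos\frac{(2j-1)\pi}{2m+1}$ — I would match $2m+1=n$ to land on $2\cos\frac{2\pi}{n}$ after reindexing. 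Once the seed matrix is identified, the remainder is a routine application of the already-established blow-up construction, so no genuinely new technique is needed beyond bookkeeping. Finally, I would remark that Condition \ref{condition} is used only to guarantee $\lambda$ is totally real with all conjugates in $[-2,2]$ and that $\lambda$ is the dominant one, which is precisely what triggers Theorem \ref{thm:Kronecker} and forces $\lambda=2\cos\frac{2\pi}{n}$.
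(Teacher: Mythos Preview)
Your reduction via Kronecker's theorem to $\lambda=2\cos\frac{2\pi}{n}$ is correct, and for even $n$ your path construction is exactly what the paper does. The gap is in the odd-$n$ case: it does not require a construction, because it \emph{cannot occur} under Condition \ref{condition}. You stated that Condition \ref{condition}(2) forces $\lambda\ge\bigl|2\cos\frac{2\pi k}{n}\bigr|$ for every $k$ coprime to $n$, but you did not follow this to its conclusion. For odd $n\ge 3$ take $k=\lfloor n/2\rfloor=(n-1)/2$; this is coprime to $n$, and
\[
2\cos\frac{2\pi k}{n}=2\cos\Bigl(\pi-\frac{\pi}{n}\Bigr)=-2\cos\frac{\pi}{n},
\]
whose absolute value $2\cos\frac{\pi}{n}$ strictly exceeds $2\cos\frac{2\pi}{n}=\lambda$. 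Hence odd $n$ violates Condition \ref{condition}(2), and every $\lambda<2$ satisfying the hypotheses is already of the form $2\cos\frac{\pi}{m}=\lambda_1(P_{m-1})$. This is precisely the paper's argument (its Case 1 rules out odd $n$; its Case 2 handles an alternative parametrisation and again lands on $2\cos\frac{\pi}{m}$).

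Your attempted workaround for odd $n$ would in fact fail. The tridiagonal path matrix with a single corner $1$ that you describe has characteristic polynomial $U_m(x/2)-U_{m-1}(x/2)$, whose roots are $2\cos\frac{(2j-1)\pi}{2m+1}$; the \emph{largest} of these is $2\cos\frac{\pi}{2m+1}=2\cos\frac{\pi}{n}$, not $2\cos\frac{2\pi}{n}$. So the seed matrix realises the wrong number as its spectral radius, and no reindexing fixes this. More conceptually, $2\cos\frac{2\pi}{n}$ for odd $n$ (e.g.\ $2\cos\frac{2\pi}{5}=(\sqrt{5}-1)/2<1$) is not the spectral radius of any graph at all, which is consistent with its failing Condition \ref{condition}(2). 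The fix is simply to delete the odd-$n$ construction and insert the one-line contradiction above.
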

    \begin{proof}
If $\lambda=2$, we have $\lambda=\lambda_1(K_3)$. 
  
When $\lambda<2$, we claim that $\lambda=2\cos\frac{\pi}{n}=\lambda_1(P_{n-1})$ for some $n\geq 2$, where $P_{n-1}$ is a path of order $n-1$.

Indeed, by Condition \ref{condition}, all algebraic conjugates of $\lambda$ lie in the interval $[-2,2]$.
Applying Theorem \ref{thm:Kronecker}, we have $\lambda=2\cos\frac{2\pi k}{n}$ for some integers $k$ and $n$. Without loss of generality, we assume $(k,n)=1$. We divide the proof of the claim into two cases.

{\textbf{Case 1:}} $1\leq k\leq \frac{n}{2}$.

We derive from Lemma \ref{lem:Psin} that the monic minimal polynomial of $\lambda$ is $2^{\frac{\phi(n)}{2}}\Psi_n(\frac{x}{2})$. The largest root of $2^{\frac{\phi(n)}{2}}\Psi_n(\frac{x}{2})$ is $2\cos\frac{2\pi}{n}$.
Since $\lambda$ is the largest root of $2^{\frac{\phi(n)}{2}}\Psi_n(\frac{x}{2})$ by Condition \ref{condition} (2), we obtain that $\lambda=2\cos\frac{2\pi}{n}$ for some integer $n\geq 4$. Next we show $n$ has to be even.  Assume, otherwise, $n=2k+1\ (k\geq 1)$. Then $\left\lfloor\frac{n}{2}\right\rfloor=k$ is coprime with $n.$ Thus, we derive from Lemma \ref{lem:Psin} that $2\cos\frac{2\pi \left\lfloor\frac{n}{2}\right\rfloor}{n}=-2\cos\frac{\pi}{n}$ is a root of $2^{\frac{\phi(n)}{2}}\Psi_n(\frac{x}{2})$. Notice that its absolute value is strictly larger than $\lambda$. This violates Condition \ref{condition} (2). Hence, $n$ is an even integer and $\lambda=2\cos\frac{\pi}{k}$, where $n=2k$.

{\textbf{Case 2:}} $\frac{n}{2}<k< n$.

In this case,  we have 
    \begin{equation*}
        \lambda=2\cos\frac{2\pi k}{n}=-2\cos\frac{2\pi(2k-n)}{2n}=-2\cos\frac{2\pi p}{q},
    \end{equation*}
    where $1\leq p\leq \frac{q}{2},\ (p,q)=1$ and $\frac{p}{q}=\frac{2k-n}{2n}.$
By Lemma \ref{lem:Psin}, the monic minimal polynomial of $\lambda$ is $(-2)^{\frac{\phi(q)}{2}}\Psi_q(-\frac{x}{2}).$ When $q=2\ell+1, \ell\geq 1$, the largest root of $(-2)^{\frac{\phi(q)}{2}}\Psi_q(-\frac{x}{2})$ is $-2\cos\frac{2\pi \ell}{q}=2\cos\frac{\pi}{q}.$ Hence, we have $\lambda=2\cos\frac{\pi}{q}$ by Condition \ref{condition}. When $q=2\ell,\ell \geq 2$, the largest root of $(-2)^{\frac{\phi(q)}{2}}\Psi_q(-\frac{x}{2})$ is no more than $-2\cos\frac{2\pi(\ell-1)}{q}=2\cos\frac{\pi}{\ell}.$ Observe that $-2\cos\frac{2\pi}{q}=-2\cos\frac{\pi}{\ell}$ is a root of the polynomial $(-2)^{\frac{\phi(q)}{2}}\Psi_q(-\frac{x}{2})$. Since $\lambda$ satisfies Condition \ref{condition}, the only possibility is $\lambda=2\cos\frac{\pi}{\ell}.$ This completes the proof. 
    \end{proof}

\section{Proof of Theorem \ref{thm:compute}}
\label{sec:calculation}
In this section, we prove Theorem \ref{thm:compute}. 
We first prove the following lemma. 
\begin{lemma}\label{lemma:bipartite}
    Let $G=(V,E)$ be a bipartite graph. Then, we have $$\lambda_1(G)\leq \frac{|V|}{2}.$$
\end{lemma}
\begin{proof}
    Assume the bipartite set of $G$ is $A$ and $B$. Let $|A|=s$ and $|B|=t$. Since $G$ is a subgraph of $K_{s,t}$, by Lemma \ref{lemma:pf1}, we have $$\lambda_1(G)\leq \lambda_1(K_{s,t})=\sqrt{st}\leq \frac{s+t}{2}.$$
\end{proof}

\begin{proof}[Proof of Theorem \ref{thm:compute} $(i)$]
Because $\lambda_1(K_{n,n+m})=\sqrt{n(n+m)}$, we have $$\kappa\left(\sqrt{(n(n+m)}\right)\leq 2n+m.$$ 

If $\kappa\left(\sqrt{(n(n+m)}\right)< 2n+m$,  then there exists a graph $G=(V,E)$ such that $$\lambda_1(G)=\sqrt{n(n+m)}\,\,\text{ and }\,\,|V|\leq 2n+m-1.$$ Since the monic minimum polynomial of $\sqrt{n(n+m)}$ is $x^2-n(n+m)$, we have $$x^2-n(n+m)\big|P_G(x),$$ where $P_G(x)$ is the characteristic polynomial of $A_G$. It follows that $-\sqrt{n(n+m)}$ is also an eigenvalue of $A_G$. This implies that $G$ is a bipartite graph. By Lemma \ref{lemma:bipartite}, we have $$\lambda_1(G)\leq \frac{2n+m-1}{2}<\sqrt{n(n+m)}, $$
   where the last inequality above holds by $n>\frac{(m-1)^2}{4}$. This is a contradiction. Therefore $$\kappa\left(\sqrt{n(n+m)}\right)=2n+m.$$ This concludes the proof.
\end{proof}
To prove Theorem \ref{thm:compute} $(ii)$, we need the following observation.
\begin{lemma}\label{lemma:KVK}
  For any $n> m\geq 2$, we have \[\lambda_1\left(K_{n-m}\vee \overline{K_{m}}\right)=\frac{n-m-1+\sqrt{(n-m-1)^2+4m(n-m)}}{2}.\]
\end{lemma}
\begin{proof}
   We consider a function $f$ on $K_{n-m}\vee \overline{K_{m}}$ with the following form:
  \begin{equation*}
          f(x)=
            \begin{cases}
              1, &  x\in K_{n-m}\\
              \frac{-(n-m-1)+\sqrt{(n-m-1)^2+4m(n-m)}}{2m}, &  x\in \overline{K_m}
            \end{cases}.
  \end{equation*}
  Then we have $A_Gf=\lambda f,$ where $\lambda=\frac{n-m-1+\sqrt{(n-m-1)^2+4m(n-m)}}{2}.$ Since $G$ is connected and $f$ is positive on all vertices of $G,$ by the Perron-Frobenius theorem, we obtain $\lambda=\lambda_1(K_{n-m}\vee \overline{K_{m}}).$
\end{proof}

\begin{proof}[Proof of Theorem \ref{thm:compute} $(ii)$]
   By Lemma \ref{lemma:KVK}, we have $\kappa\left(\frac{n-m-1+\sqrt{(n-m-1)^2+4m(n-m)}}{2}\right)\leq n$. For any graph $G=(V,E)$ with $|V|\leq n-1$, we have $$\lambda_1(G)\leq n-2< \frac{n-m-1+\sqrt{(n-m-1)^2+4m(n-m)}}{2},$$
   where the last inequality above is due to $n>(m-1)^2+1$. This concludes the proof.
\end{proof}

\section*{Acknowledgement}
This work is supported by the National Key R \& D Program of China 2023YFA1010200 and the National Natural Science Foundation of China No. 12431004.

\bibliography{bib/paper.bib}

\end{document}